\documentclass[11pt]{article}

\usepackage[a4paper,margin=1.1in]{geometry}
\usepackage{amsmath,amssymb,amsthm,mathtools}
\usepackage{enumitem}
\usepackage{hyperref}
\hypersetup{
  hidelinks,
  pdftitle={Adjoint closures of singular quadratic pencils and First's Pfister-type conjecture},
  pdfauthor={Shisong Xu},
  pdfkeywords={quadratic forms, systems of quadratic forms, Pfister local-global principle, adjoint closure, singular quadratic pencils, Kronecker blocks, minimal indices}
}
\usepackage{aliascnt}
\usepackage[nameinlink,capitalise]{cleveref}

\newtheorem{theorem}{Theorem}[section]

\newaliascnt{corollary}{theorem}
\newtheorem{corollary}[corollary]{Corollary}
\aliascntresetthe{corollary}

\newaliascnt{proposition}{theorem}
\newtheorem{proposition}[proposition]{Proposition}
\aliascntresetthe{proposition}

\newaliascnt{lemma}{theorem}
\newtheorem{lemma}[lemma]{Lemma}
\aliascntresetthe{lemma}

\theoremstyle{remark}
\newaliascnt{remark}{theorem}
\newtheorem{remark}[remark]{Remark}
\aliascntresetthe{remark}

\theoremstyle{definition}
\newaliascnt{definition}{theorem}
\newtheorem{definition}[definition]{Definition}
\aliascntresetthe{definition}

\newaliascnt{example}{theorem}

\aliascntresetthe{example}

\crefname{theorem}{Theorem}{Theorems}
\Crefname{theorem}{Theorem}{Theorems}
\crefname{proposition}{Proposition}{Propositions}
\Crefname{proposition}{Proposition}{Propositions}
\crefname{lemma}{Lemma}{Lemmas}
\Crefname{lemma}{Lemma}{Lemmas}
\crefname{corollary}{Corollary}{Corollaries}
\Crefname{corollary}{Corollary}{Corollaries}
\crefname{remark}{Remark}{Remarks}
\Crefname{remark}{Remark}{Remarks}
\crefname{definition}{Definition}{Definitions}
\Crefname{definition}{Definition}{Definitions}
\crefname{example}{Example}{Examples}
\Crefname{example}{Example}{Examples}
\crefname{section}{Section}{Sections}
\Crefname{section}{Section}{Sections}

\DeclareMathOperator{\End}{End}
\DeclareMathOperator{\Hom}{Hom}
\DeclareMathOperator{\Mat}{M}
\DeclareMathOperator{\Span}{span}
\DeclareMathOperator{\Tr}{Tr}
\DeclareMathOperator{\sgnop}{sgn}
\DeclareMathOperator{\rad}{rad}
\DeclareMathOperator{\Vis}{Vis}
\DeclareMathOperator{\Jac}{Jac}
\DeclareMathOperator{\Cent}{Cent}

\newcommand{\Cl}{\operatorname{Cl}}
\newcommand{\cQ}{\mathcal Q}
\newcommand{\cR}{\mathcal R}
\newcommand{\cS}{\mathcal S}
\newcommand{\cH}{\mathcal H}
\newcommand{\bQ}{\mathbb Q}
\newcommand{\bR}{\mathbb R}
\newcommand{\op}{\mathrm{op}}
\newcommand{\totalSgn}{\operatorname{sgn}}

\title{Adjoint closures of singular quadratic pencils and First's Pfister-type conjecture}
\author{Shisong Xu\\[0.35em]
\small Department of Mathematics, Nanjing University\\
\small 22 Hankou Road, Nanjing, Jiangsu 210093, People's Republic of China\\
\small Email: \href{mailto:shsxu@smail.nju.edu.cn}{shsxu@smail.nju.edu.cn}}
\date{}

\begin{document}
\maketitle

\begin{abstract}
We construct, over every formally real field, a singular pair of quadratic forms in dimension seven whose adjoint closure equals its pencil and consists entirely of hyperbolic forms in First's sense, although the pair is not weakly hyperbolic. This disproves First's conjecture that his local--global criterion for nonsingular pairs extends to singular pairs. The example has minimal dimension over formally real number fields and real closed fields. The construction uses an explicit closure formula: adjoining a symmetric singular Kronecker block of positive minimal index to a pair with a nondegenerate first member forces the adjoint closure of the sum to equal its pencil. We determine the corresponding closure formulas for Kronecker decompositions and compute the adjoint algebra, its Jacobson radical, and the involution-trace form of the seven-dimensional example.
\end{abstract}

\noindent\textbf{Keywords.}
Quadratic forms; systems of quadratic forms; adjoint algebras; Pfister local--global principle; singular quadratic pencils; Kronecker blocks.

\medskip
\noindent\textbf{2020 Mathematics Subject Classification.}
Primary 11E04, 11E81; Secondary 15A22.

\section{Introduction}
\label{sec:introduction}

Pfister's local--global principle identifies the torsion classes in the Witt group of a field of characteristic different from $2$ as those with zero signature at every ordering; see \cite[Chapter~VIII]{Lam2005}. First extended this principle to finite systems of quadratic forms \cite{First2020}. His criterion expresses weak hyperbolicity of a system $\cQ$ in terms of the involution-trace form of its adjoint algebra $A(\cQ)$. The construction uses the theory of adjoints and Hermitian categories developed in \cite{BFFM2014,QSS1979,Wilson2013}. For a nonsingular pair $\cQ=\{q_1,q_2\}$ over a number field or a real closed field, First proved the more specific criterion
\begin{equation}
\label{eq:first-B-intro}
 \cQ\text{ is weakly hyperbolic}
 \quad\Longleftrightarrow\quad
 \totalSgn q=0\text{ for every }q\in\Cl(\cQ).
\end{equation}
Here $\Cl(\cQ)$ is the space of quadratic forms satisfying every adjoint relation of $\cQ$, and $\totalSgn q=0$ means that $q$ has signature zero at every ordering. First conjectured that the nonsingularity hypothesis could be removed \cite[Introduction, after Theorem~B]{First2020}. The following theorem disproves this conjecture.

\begin{theorem}[Counterexample to First's conjecture]
\label{thm:main-intro}
Let $K$ be a formally real field. There is a singular pair
\[
 \cQ=\{Q_1,Q_2\}
\]
of quadratic forms on $K^7$ such that
\[
 \Cl(\cQ)=\Span_K\{Q_1,Q_2\},
\]
every form in $\Cl(\cQ)$ is hyperbolic, and $\cQ$ is not weakly hyperbolic. Consequently, the nonsingularity hypothesis in \eqref{eq:first-B-intro} cannot be omitted, even over $\bQ$ or a real closed field.
\end{theorem}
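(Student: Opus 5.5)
The plan is to build $\cQ$ as an orthogonal sum $\cQ=\cN\perp\cL$. Here $\cN$ is a $4$-dimensional nonsingular pair whose first member is nondegenerate, and $\cL$ is the symmetric singular Kronecker block of minimal index $1$ on $K^3$. I expect the closure formula announced in the abstract to give $\Cl(\cQ)=\Span_K\{Q_1,Q_2\}$ directly: adjoining such a block to a pair with nondegenerate first member forces the adjoint closure to equal the pencil. So the work splits into choosing $\cN$, checking hyperbolicity of every pencil member, and proving that $\cQ$ is not weakly hyperbolic.

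\emph{Choice of blocks.} For $\cL$ I take $\ell_1=2x_2x_1$ and $\ell_2=2x_2x_3$ on $K^3$. For $\cN$ I take two orthogonal copies of the binary pair $n_1=y_1^2-y_2^2$, $n_2=2y_1y_2$. The first member of $\cN$ is $\langle1,-1,1,-1\rangle$, which is nondegenerate, so the closure formula applies, and $Q_i=n_i\perp n_i\perp\ell_i$.

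\emph{Hyperbolicity of the pencil.} Let $(a,b)\neq(0,0)$. The $\cL$-part of $aQ_1+bQ_2$ is $2x_2(ax_1+bx_3)$: a hyperbolic plane plus a one-dimensional radical. The binary form $an_1+bn_2$ has determinant $-(a^2+b^2)\neq0$, since $K$ is formally real. It is therefore $\langle u,-u(a^2+b^2)\rangle$ for some $u\in K^\times$. Two copies give $\langle u,u\rangle\otimes\langle1,-s\rangle$ with $s=a^2+b^2$. Because $s$ is represented by $\langle1,1\rangle$, the form $\langle1,1\rangle$ is isometric to $\langle s,s\rangle$, and hence $\langle u,u\rangle\otimes\langle1,-s\rangle$ is hyperbolic. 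So every nonzero form in $\Cl(\cQ)$ has hyperbolic nondegenerate part, which is what I take hyperbolicity in First's sense to mean for singular forms. Here the doubling is essential: a single copy of $(n_1,n_2)$ would only give signature zero, not hyperbolicity, unless $s$ is a square.

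\emph{Failure of weak hyperbolicity.} This is the step I expect to be the main obstacle. The binary pair $(n_1,n_2)$ is the trace of the hermitian form $\langle1\rangle$ over $K(i)/K$, because $n_1+in_2=(y_1+iy_2)^2$. Hence $m\cdot\cN$ corresponds to the hermitian form $2m\langle1\rangle$, which is anisotropic because $K$ is formally real. In particular $m\cdot\cN$ has no common isotropic vector. It remains to rule out that adding copies of the Kronecker block creates one. My plan is to use the uniqueness of the Kronecker decomposition, i.e.\ Witt-type cancellation of singular blocks, to reduce the statement for $m\cdot\cQ$ to the regular part $m\cdot\cN$. Concretely, a common totally isotropic subspace of the required dimension in $m\cdot\cQ$ would project to a common totally isotropic subspace of $m\cdot\cN$ of dimension at least $2m$. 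This projection step must be made compatible with whatever precise definition of weak hyperbolicity for singular systems the paper adopts, so it is where I expect the argument to need the most care. Such a subspace contradicts anisotropy of $2m\langle1\rangle$.

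\emph{Conclusion.} The construction uses only the coefficients $0,\pm1,2$, so it is defined over every formally real field, in particular over $\bQ$ and over any real closed field. This gives the final assertion that the nonsingularity hypothesis in \eqref{eq:first-B-intro} cannot be omitted.
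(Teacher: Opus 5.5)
There is a genuine gap, and it is fatal. Your closure and pencil computations are fine: the closure is the pencil by \cref{thm:closure-collapse}, because $n_1\perp n_1$ is nondegenerate. The step you flagged, however, fails outright. Your regular summand $\mathcal N$ is itself hyperbolic as a system, so $\cQ=\mathcal N\perp\mathcal L$ is hyperbolic and in particular weakly hyperbolic. In coordinates $y_1,\dots,y_4$, the members of $\mathcal N$ are $y_1^2-y_2^2+y_3^2-y_4^2$ and $2y_1y_2+2y_3y_4$. Both vanish identically on the complementary planes
\[
 U=\{(s,t,t,-s):s,t\in K\},
 \qquad
 W=\{(s,t,-t,s):s,t\in K\}.
\]
Together with $\Span_K\{e_1,e_3\}$ and $\Span_K\{e_2\}$ in your block $\mathcal L$, this gives a hyperbolic decomposition of $\cQ$. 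The error is the hermitian identification. The identity $n_1+in_2=(y_1+iy_2)^2$ exhibits the $K(i)$-\emph{bilinear} form $z^2$, not the hermitian form $z\bar z$. So $\mathcal N$ is the transfer of the symmetric bilinear form $\langle1,1\rangle$ over $K(i)$. That form is hyperbolic there, since $z^2+w^2=(z+iw)(z-iw)$; it is not anisotropic. Already $(1,0,0,-1)$ is a common isotropic vector of $\mathcal N$.

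No sharper cancellation argument can rescue this choice, because $\mathcal N$ lacks the ingredient the construction needs. Let $J=(n_1\perp n_1)^{-1}(n_2\perp n_2)$. Then $J^2=-1$, so $K[J]\cong K(i)$ is its own bicommutant in $\End(K^4)$. The bicommutant argument of \cref{lem:dimension-two-closure} then shows that $\Cl(\mathcal N)$ is just the pencil of $\mathcal N$. Over $\bQ$ or a real closed field, \cref{thm:first-B} therefore forces $\mathcal N$ to be weakly hyperbolic.

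What you need instead is a regular summand whose closure contains a form of nonzero signature outside its pencil; over those fields this is forced by \cref{thm:first-B}. The paper uses First's pair \eqref{eq:R-matrices}. There $J$ is nilpotent with $J^2\neq0=J^3$, and $R_3=R_1J^2\in\Cl(\cR)$ has signature $1$, so $\cR$ is not weakly hyperbolic by \cref{cor:closure-obstruction}. The block $M_1$ then removes $R_3$ from the closure of the sum through a cross-adjoint. \cref{lem:remove-hyperbolic}, which is Witt cancellation in the Hermitian category via \cite[Proposition~5.12]{BFFM2014}, shows that adjoining the hyperbolic block does not change weak hyperbolicity.

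That lemma replaces your projection step. Your projection step is also invalid as stated: projecting a common isotropic subspace of an orthogonal sum onto one summand need not give an isotropic subspace, because the cross terms from the two summands can cancel.
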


We use First's definition of hyperbolicity, which allows degenerate forms; see \cref{def:hyperbolic-systems}. The example is the orthogonal sum of his four-dimensional nonsingular pair \cite[Example~2.2]{First2020} and the three-dimensional symmetric singular Kronecker block $M_1$. Every form in the regular pencil is hyperbolic, but its adjoint closure contains a form of nonzero signature. Cross-adjoints with $M_1$ exclude this additional form from the closure of the sum. Since $M_1$ is hyperbolic as a system, adjoining it does not change weak hyperbolicity.

The closure calculation extends to every positive singular minimal index. Let
\[
 M_\varepsilon=\{S_{\varepsilon,1},S_{\varepsilon,2}\}
\]
be the symmetric singular Kronecker block of minimal index $\varepsilon\geq1$. We prove
\[
 \Cl(M_\varepsilon)=\Span_K\{S_{\varepsilon,1},S_{\varepsilon,2}\}.
\]
If $\cR=\{R_1,R_2\}$ and $R_1$ is nondegenerate, then
\begin{equation}
\label{eq:intro-collapse}
 \Cl(\cR\perp M_\varepsilon)
 =
 \Span_K\{R_1\perp S_{\varepsilon,1},
 R_2\perp S_{\varepsilon,2}\}.
\end{equation}
Thus adjoining one positive-minimal-index block forces the adjoint closure to equal the pencil.

For a Kronecker decomposition over an infinite field,
\[
 \cQ\cong \cR\perp M_{\varepsilon_1}\perp\cdots\perp M_{\varepsilon_s},
\]
with $s\geq1$ and $\cR=\{R_1,R_2\}$ nonsingular, let $\Vis_{\cR}(\cQ)$ denote the image of $\Cl(\cQ)$ under restriction to the regular summand. We prove that
\[
 \Vis_{\cR}(\cQ)=
 \begin{cases}
 \Cl(\cR),&\varepsilon_1=\cdots=\varepsilon_s=0,\\[1mm]
 \Span_K\{R_1,R_2\},&\max_i\varepsilon_i\geq1.
 \end{cases}
\]
If $R_1$ and $R_2$ are linearly independent, the same argument determines the entire closure. Writing $\cQ=\{Q_1,Q_2\}$, we obtain
\begin{equation}
\label{eq:intro-full-dichotomy}
 \Cl(\cQ)=
 \begin{cases}
 \{T_R\perp0:T_R\in\Cl(\cR)\},
 &\varepsilon_1=\cdots=\varepsilon_s=0,\\[1mm]
 \Span_K\{Q_1,Q_2\},
 &\max_i\varepsilon_i\geq1.
 \end{cases}
\end{equation}
Zero blocks therefore preserve the regular closure, whereas a positive minimal index restricts its image to the regular pencil.

These formulas also give the minimal dimension of a counterexample.

\begin{theorem}[Minimal dimension]
\label{thm:optimal-intro}
Let $K$ be a number field or a real closed field. Suppose that a singular pair $\cQ=\{q_1,q_2\}$ on $V$ satisfies
\[
 \totalSgn T=0
 \qquad\text{for every }T\in\Cl(\cQ),
\]
but $\cQ$ is not weakly hyperbolic. Then
\[
 \dim_K V\geq7.
\]
For formally real number fields and real closed fields, the bound is sharp.
\end{theorem}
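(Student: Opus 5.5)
We will prove the lower bound by analysing the Kronecker decomposition of a hypothetical singular counterexample $\cQ=\{q_1,q_2\}$ on $V$. Sharpness then follows from \cref{thm:main-intro}, whose seven-dimensional pair works over every formally real field, in particular over formally real number fields and real closed fields.

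First we reduce to the case where $K$ is infinite, which holds for number fields and real closed fields, so the Kronecker decomposition is available:
\[
 \cQ\cong\cR\perp M_{\varepsilon_1}\perp\cdots\perp M_{\varepsilon_s},
\]
with $s\ge1$ and $\cR$ nonsingular. Weak hyperbolicity is compatible with orthogonal sums. Each block $M_\varepsilon$ is hyperbolic as a system when $\varepsilon\ge1$, since it has a totally isotropic common subspace of half dimension or more. The zero blocks $M_0$ are the radical part and are irrelevant in First's degenerate sense. Hence $\cQ$ is weakly hyperbolic if and only if $\cR$ is. So the failure of weak hyperbolicity forces $\cR$ to be non-weakly-hyperbolic, and in particular $\cR\neq 0$.

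We then split on $\max_i\varepsilon_i$.

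\emph{Case 1: all $\varepsilon_i=0$.} By \eqref{eq:intro-full-dichotomy}, or its $\Vis$ version when $R_1,R_2$ are dependent, $\Cl(\cQ)$ restricts onto $\Cl(\cR)$, and forms $T_R\perp 0$ have the same signatures as $T_R$. Thus $\cR$ is a nonsingular pair with $\totalSgn T=0$ on $\Cl(\cR)$ that is not weakly hyperbolic. This contradicts First's theorem \eqref{eq:first-B-intro}, so Case 1 is impossible.

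\emph{Case 2: some $\varepsilon_i\ge1$.} Now $\Vis_\cR(\cQ)=\Span_K\{R_1,R_2\}$. The hypothesis says every form in the regular pencil has total signature zero, yet $\cR$ is not weakly hyperbolic. The main obstacle is to show that this forces $\dim\cR\ge4$. We will argue as follows.

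\begin{itemize}
\item A nonzero form of signature zero everywhere has even rank, so each $R_i$ and each combination has even rank.
\item If $\dim\cR\le 2$ and every pencil member has zero signature, we show directly that $\cR$ is weakly hyperbolic. For example, if $\cR$ is nonsingular of dimension $2$, a zero-signature binary form is a multiple of a hyperbolic-up-to-torsion form. Over number fields and real closed fields, total signature zero on $\langle a,b\rangle$ means $-ab$ is a sum of squares, so the form is weakly hyperbolic. A common weakly isotropic line then follows by diagonalising the pencil simultaneously (both forms are diagonal in a common basis, or $R_1$ is nondegenerate and $R_1^{-1}R_2$ has an eigenvector).
\item Dimension $3$ is excluded because a nonsingular regular part can be normalised so that some pencil member is nondegenerate of odd rank, which has nonzero signature at some ordering when $K$ is formally real. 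If $K$ is not formally real, every pair is weakly hyperbolic, so there is nothing to check.
\end{itemize}

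This gives $\dim\cR\ge4$.

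Finally, the singular part has dimension at least $2\varepsilon+1\ge3$ when $\varepsilon\ge1$. Therefore $\dim V\ge 4+3=7$. The delicate points are the low-dimensional claims that zero signature on the pencil implies weak hyperbolicity in dimensions at most $3$. If the simultaneous-diagonalisation argument fails in the non-diagonalisable case (a Jordan block for $R_1^{-1}R_2$), we will instead invoke First's criterion \eqref{eq:first-B-intro} for $\cR$ itself. For this we check that $\Cl(\cR)=\Span\{R_1,R_2\}$ when $\dim\cR\le3$ in the non-diagonalisable case, which is a direct computation with the adjoint algebra of a single Jordan-type block.
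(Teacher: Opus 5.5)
Your skeleton matches the paper's proof. You strip off the hyperbolic singular blocks (only the easy direction of \cref{lem:remove-hyperbolic} is actually used), exclude the case of only zero minimal indices by \cref{lem:zero-block} and \cref{thm:first-B}, and show that a nonsingular regular part with zero-signature pencil that is not weakly hyperbolic has dimension at least $4$.

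The gap is in the dimension-two case of this last step, the one you call the main obstacle. Your ``direct'' argument does not work. Weak hyperbolicity of each pencil member says nothing about weak hyperbolicity of the pair; this is exactly the phenomenon behind First's four-dimensional example. Apart from the trivial case where $J=R_1^{-1}R_2$ is scalar, the diagonalisation/eigenvector route only treats cases your hypothesis already rules out. Indeed, if $R_1$ is nondegenerate, $J$ is not scalar, and $\lambda\in K$ is an eigenvalue of $J$, then $R_2-\lambda R_1=R_1(J-\lambda I)$ has rank one. This holds whether $J$ is diagonalisable or a Jordan block, so that pencil member has signature $\pm1$ at every ordering.

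The only substantive case is one you never address: $J$ non-scalar with irreducible characteristic polynomial. For instance, over $\bR$ take $R_1=\langle1,-1\rangle$ and $R_2=\left(\begin{smallmatrix}0&1\\1&0\end{smallmatrix}\right)$. Then $J$ has eigenvalues $\pm i$, every nonzero pencil member is indefinite, the two forms have no common isotropic line, and $2\times\cR$ (not $\cR$) is hyperbolic.

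Your fallback targets ``a single Jordan-type block'', which cannot occur here. Its assertion that $\Cl(\cR)=\Span_K\{R_1,R_2\}$ whenever $\dim\cR\le3$ is also false in dimension $3$. If $J$ is a single $3\times3$ Jordan block, then $R_1J^2$ lies in $\Cl(\cR)$ but outside the pencil, by the same mechanism that produces $R_3=R_1J^2$ in \cref{prop:R-adjoint-closure}. That error is harmless only because dimension $3$ is excluded by parity.

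What is missing is the uniform closure computation of \cref{lem:dimension-two-closure}. With $R_1$ nondegenerate, the first components of $A(\cR)$ form $\Cent_{\Mat_2(K)}(J)$. The closure condition on $T$ says exactly that $R_1^{-1}T$ commutes with this centralizer. The bicommutant is $K$ if $J$ is scalar, and $K[J]=\Span_K\{I,J\}$ otherwise. The reason is that a non-scalar $2\times2$ matrix has minimal polynomial of degree $2$ whether or not it splits over $K$, so $\Cent_{\Mat_2(K)}(J)=K[J]$ is its own centralizer. Hence $\Cl(\cR)=\Span_K\{R_1,R_2\}$ for every nonsingular two-dimensional pair, and \cref{thm:first-B} yields weak hyperbolicity. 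If you replace your diagonalisation argument and the dimension-three claim by this lemma, the rest of your proposal goes through as in the paper.
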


The lower bound is the sum of two minima: four for a nonsingular pair that is not weakly hyperbolic despite having zero signatures throughout its pencil, and three for a singular Kronecker block of positive minimal index. For the seven-dimensional example, we also compute the adjoint algebra. It has dimension $18$, a $14$-dimensional Jacobson radical, semisimple quotient $K^4$, and involution-trace form
\[
 3x^2+7u^2+8ps.
\]
This form has signature $2$ at every ordering, giving a second proof that the pair is not weakly hyperbolic.

The classical classification of symmetric matrix pencils separates a regular part from singular Kronecker blocks \cite{LeepSchueller1999,Waterhouse1976}. The formulas above describe the adjoint closure of this decomposition, rather than the decomposition itself. Their proof is explicit: cross-adjoints constructed from $R_1^{-1}R_2$ force the regular restriction of each closure form to lie in the original pencil.

Section~\ref{sec:preliminaries} records the definitions and results used in the proofs. Section~\ref{sec:construction} constructs the counterexample. Section~\ref{sec:visibility} proves the general closure formulas and the dimension bound. Section~\ref{sec:adjoint-obstructions} computes the adjoint algebra and its involution-trace form.

\section{Systems, adjoints, and closures}
\label{sec:preliminaries}

Throughout, $K$ is a field of characteristic different from $2$, and all vector spaces are finite-dimensional over $K$. We write $\Mat_{m\times n}(K)$ for the space of $m\times n$ matrices, $\Mat_n(K)=\Mat_{n\times n}(K)$, and $I_V$ for the identity on $V$. We use the same symbol for a quadratic form $q$ and the associated symmetric bilinear form
\[
 (x,y)\longmapsto\tfrac12\bigl(q(x+y)-q(x)-q(y)\bigr).
\]
After choosing a basis, we identify $q$ with its symmetric Gram matrix $T$, so that $q(x)=x^tTx$. For an ordering $P$ of $K$, write $\sgnop_P(q)$ for the signature of $q$ at $P$; if $q$ is degenerate, this is the signature of the induced nondegenerate form on $V/\rad(q)$. We write $\totalSgn q=0$ to mean $\sgnop_P(q)=0$ for every ordering $P$. A field is formally real if and only if it admits an ordering. If $K$ has no ordering, the condition $\totalSgn q=0$ is vacuous.

\begin{definition}
\label{def:hyperbolic-systems}
Let $\cQ=\{q_i\}_{i\in I}$ be a system of quadratic forms on $V$.
\begin{enumerate}[label=\textup{(\roman*)}]
\item The system $\cQ$ is \emph{hyperbolic} if there are subspaces $U,W\subseteq V$ such that
\[
 V=U\oplus W,
 \qquad
 q_i|_{U\times U}=q_i|_{W\times W}=0
 \quad(i\in I).
\]
\item It is \emph{weakly hyperbolic} if $n\times\cQ$ is hyperbolic for some $n\geq1$.
\item A pair $\{q_1,q_2\}$ is \emph{nonsingular} if there is a field extension $L/K$ for which
\[
 \Span_L\{(q_1)_L,(q_2)_L\}
\]
contains a nondegenerate form. Otherwise it is \emph{singular}.
\end{enumerate}
\end{definition}

Here $n\times\cQ=\{n\times q_i\}_{i\in I}$ is the orthogonal sum of $n$ copies of the system on $V^{\oplus n}$. In this definition, $U$ and $W$ need not have the same dimension, and the forms may be degenerate. For a single nondegenerate form, hyperbolicity has its usual meaning.

For two systems with the same index set,
\[
 \cQ=\{q_i\}_{i\in I}\text{ on }V,
 \qquad
 \cQ'=\{q_i'\}_{i\in I}\text{ on }V',
\]
their orthogonal sum is
\[
 \cQ\perp\cQ'=\{q_i\oplus q_i'\}_{i\in I}.
\]

Following First \cite[Section~4]{First2020}, the algebra of adjoints of $\cQ$ is
\begin{equation}
\label{eq:adjoint-algebra}
 A(\cQ)=
 \left\{
 (\phi,\psi^{\op}):
 \psi^tq_i=q_i\phi\text{ for all }i\in I
 \right\}
 \subseteq\End(V)\times\End(V)^{\op}.
\end{equation}
The multiplication and canonical involution are
\[
 (\phi,\psi^{\op})(\phi',({\psi'})^{\op})
 =
 (\phi\phi',(\psi'\psi)^{\op}),
\]
\[
 (\phi,\psi^{\op})^\sigma=(\psi,\phi^{\op}).
\]
The involution-trace form is
\begin{equation}
\label{eq:trace-form-definition}
 q_{A(\cQ),\sigma}(a)
 =\Tr_{A(\cQ)/K}(a^\sigma a),
\end{equation}
where $\Tr_{A(\cQ)/K}(x)$ denotes the trace of left multiplication by $x$ on the finite-dimensional $K$-algebra $A(\cQ)$.

\begin{definition}
\label{def:adjoint-closure}
The \emph{adjoint closure} of $\cQ$ is
\begin{equation}
\label{eq:closure-definition}
 \Cl(\cQ)=
 \left\{
 q=q^t:\ \psi^tq=q\phi
 \text{ for every }(\phi,\psi^{\op})\in A(\cQ)
 \right\}.
\end{equation}
\end{definition}

By definition,
\[
 \Span_K\{q_i:i\in I\}\subseteq\Cl(\cQ).
\]
The inclusion can be strict; see \cref{prop:R-adjoint-closure}. We first record two elementary properties of the closure.

\begin{lemma}
\label{lem:hyperbolic-closure}
If $\cQ$ is hyperbolic, then every form in $\Cl(\cQ)$ is hyperbolic with respect to the same decomposition.
\end{lemma}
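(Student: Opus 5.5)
The idea is to realize the hyperbolic decomposition itself through a single adjoint pair, namely the two complementary projections. Suppose $\cQ$ is hyperbolic with respect to $V=U\oplus W$. Let $e_U$ be the projection onto $U$ along $W$, and $e_W=I_V-e_U$ the projection onto $W$ along $U$. I will check that
\[
 (e_U,e_W^{\op})\in A(\cQ),
\]
that is, $e_W^t q_i=q_i e_U$ for every $i\in I$.

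In bilinear language this relation reads $q_i(e_Wx,y)=q_i(x,e_Uy)$ for all $x,y\in V$. Write $x=x_U+x_W$ and $y=y_U+y_W$. The left side is $q_i(x_W,y_U+y_W)=q_i(x_W,y_U)$, because $q_i$ vanishes on $W\times W$. The right side is $q_i(x_U+x_W,y_U)=q_i(x_W,y_U)$, because $q_i$ vanishes on $U\times U$. The two sides agree, so the pair lies in $A(\cQ)$.

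Now take any $q\in\Cl(\cQ)$. The defining relation \eqref{eq:closure-definition}, applied to this adjoint pair, gives $q(x_W,y)=q(x,y_U)$ for all $x,y$.

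First set $x,y\in U$. Then $x_W=0$, so $q(x,y_U)=q(x,y)=0$, and hence $q|_{U\times U}=0$.

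Next set $x,y\in W$. Then $y_U=0$, so $q(x_W,y)=q(x,y)=0$, and hence $q|_{W\times W}=0$.

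Therefore $q$ is hyperbolic with respect to the same decomposition $V=U\oplus W$.

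The one point needing care is the transpose convention in $\psi^tq=q\phi$, since it decides whether $\psi$ acts on the left or right argument. If the convention is flipped, one instead uses the swapped pair $(e_W,e_U^{\op})$. That pair is also in $A(\cQ)$ by the symmetric computation, and the same argument then goes through. Everything else is routine.
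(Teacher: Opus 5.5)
Your proposal is correct and follows essentially the same route as the paper: you show that the complementary projections give an adjoint $(e_U,e_W^{\op})\in A(\cQ)$, then apply the closure relation $q(e_Wx,y)=q(x,e_Uy)$ with $x,y\in U$ and with $x,y\in W$. Your transpose reading is also right, since $\psi^tq=q\phi$ means $q(\psi x,y)=q(x,\phi y)$, so the caveat about a flipped convention is not needed.
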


\begin{proof}
Choose $V=U\oplus W$ such that every member of $\cQ$ vanishes on $U$ and $W$. Let $e_U$ and $e_W$ be the corresponding projections. For every $q_i\in\cQ$ and $x,y\in V$,
\[
 q_i(e_Wx,y)=q_i(x,e_Uy),
\]
so
\[
 (e_U,e_W^{\op})\in A(\cQ).
\]
If $q\in\Cl(\cQ)$, then
\[
 q(e_Wx,y)=q(x,e_Uy).
\]
Taking $x,y\in U$ and then $x,y\in W$ gives $q|_{U\times U}=q|_{W\times W}=0$.
\end{proof}

\begin{lemma}[Amplification of the closure]
\label{lem:amplification}
For every system $\cQ$ and every $n\geq1$,
\[
 \Cl(n\times\cQ)
 =
 \{I_n\otimes q:q\in\Cl(\cQ)\}.
\]
\end{lemma}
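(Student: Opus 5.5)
We prove the two inclusions separately, writing elements of $\End(V^{\oplus n})$ as $n\times n$ block matrices $(\phi_{jk})$ with $\phi_{jk}\in\End(V)$. The form $n\times q_i$ has Gram matrix $I_n\otimes q_i$, the block-diagonal matrix with $q_i$ in each diagonal block.

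\emph{Step 1: describe $A(n\times\cQ)$.} A pair $(\Phi,\Psi^{\op})$ satisfies $\Psi^t(I_n\otimes q_i)=(I_n\otimes q_i)\Phi$ for all $i$ if and only if
\[
 \psi_{kj}^t q_i=q_i\phi_{jk}\qquad\text{for all }j,k,i.
\]
So this holds exactly when $(\phi_{jk},\psi_{kj}^{\op})\in A(\cQ)$ for every $j,k$. In particular, $A(n\times\cQ)$ contains the elements $E_{jk}\otimes(\phi,\psi^{\op})$ for $(\phi,\psi^{\op})\in A(\cQ)$. Here $\Phi=E_{jk}\otimes\phi$ and $\Psi=E_{kj}\otimes\psi$, so that $\Psi^t=E_{jk}\otimes\psi^t$.

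\emph{Step 2: the inclusion $\supseteq$.} Let $q\in\Cl(\cQ)$ and let $(\Phi,\Psi^{\op})\in A(n\times\cQ)$. The $(j,k)$ block of $\Psi^t(I_n\otimes q)$ is $\psi_{kj}^tq$, and the $(j,k)$ block of $(I_n\otimes q)\Phi$ is $q\phi_{jk}$. These agree by Step 1 and the definition of $\Cl(\cQ)$. Hence $I_n\otimes q\in\Cl(n\times\cQ)$, and it is clearly symmetric.

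\emph{Step 3: the inclusion $\subseteq$.} This is the main step. Let $T=(T_{jk})$ be a symmetric element of $\Cl(n\times\cQ)$, so $T_{kj}=T_{jk}^t$.

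First, $(I_V,I_V^{\op})\in A(\cQ)$. Test $T$ against $E_{jj}\otimes(I,I^{\op})$. The relation $(E_{jj}\otimes I)T=T(E_{jj}\otimes I)$ forces the off-diagonal blocks $T_{jk}$ with $j\neq k$ to vanish.

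Second, test against $E_{jk}\otimes(I,I^{\op})$. Here $\Psi^t=E_{jk}\otimes I$, and the relation $\Psi^tT=T\Phi$ compares the $(j,k)$ blocks, giving $T_{kk}=T_{jj}$. So $T=I_n\otimes q$ with $q=T_{11}$ symmetric.

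Finally, for $(\phi,\psi^{\op})\in A(\cQ)$, test against $E_{11}\otimes(\phi,\psi^{\op})$. This gives $\psi^tq=q\phi$, so $q\in\Cl(\cQ)$.

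The only care needed is the index bookkeeping with the opposite algebra, namely the transpose swapping $E_{kj}$ to $E_{jk}$. Once that is written out in Step 1, everything else follows directly.
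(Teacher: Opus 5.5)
Your proposal is correct and follows the paper's proof. Your matrix-unit tests $E_{jk}\otimes(I,I^{\op})$ are exactly the paper's adjoints $(X\otimes I_V,(X^t\otimes I_V)^{\op})$ with $X=E_{jk}$, so Step 3 just makes the paper's commutant argument explicit, and your Steps 1--2 and the final $E_{11}$ test match the paper's blockwise converse and its diagonal-block embedding.
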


\begin{proof}
Write the forms of $n\times\cQ$ as $I_n\otimes q_i$. For every $X\in\Mat_n(K)$,
\[
 \left(X\otimes I_V,(X^t\otimes I_V)^{\op}\right)
 \in A(n\times\cQ).
\]
Consequently, if $T\in\Cl(n\times\cQ)$, then
\[
 (X\otimes I_V)T=T(X\otimes I_V)
 \qquad\text{for all }X\in\Mat_n(K).
\]
The commutant of $\Mat_n(K)\otimes I_V$ is $I_n\otimes\End(V)$, so $T=I_n\otimes q$ for some symmetric form $q$ on $V$.

Let $(\phi,\psi^{\op})\in A(\cQ)$. Embedding this pair into one diagonal matrix block of $A(n\times\cQ)$ shows that $\psi^tq=q\phi$, hence $q\in\Cl(\cQ)$.

Conversely, suppose $q\in\Cl(\cQ)$ and let
\[
 (\Phi,\Psi^{\op})\in A(n\times\cQ),
 \qquad
 \Phi=(\phi_{ab}),\quad\Psi=(\psi_{ab}).
\]
The adjoint equations give
\[
 \psi_{ba}^tq_i=q_i\phi_{ab}
 \qquad\text{for all }a,b,i.
\]
Thus $(\phi_{ab},\psi_{ba}^{\op})\in A(\cQ)$ and
\[
 \psi_{ba}^tq=q\phi_{ab}.
\]
Blockwise, this is precisely
\[
 \Psi^t(I_n\otimes q)=(I_n\otimes q)\Phi.
\]
Therefore $I_n\otimes q\in\Cl(n\times\cQ)$.
\end{proof}

The preceding lemmas give the following consequence, noted by First in \cite[Introduction]{First2020}.

\begin{corollary}
\label{cor:closure-obstruction}
Let $K$ be formally real. If $\Cl(\cQ)$ contains a form with nonzero signature at some ordering of $K$, then $\cQ$ is not weakly hyperbolic.
\end{corollary}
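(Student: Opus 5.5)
We argue by contraposition: assuming $\cQ$ is weakly hyperbolic, we show that every form in $\Cl(\cQ)$ has signature zero at every ordering. So suppose $n\times\cQ$ is hyperbolic for some $n\geq1$, and let $q\in\Cl(\cQ)$. By \cref{lem:amplification}, the form $I_n\otimes q=n\times q$ lies in $\Cl(n\times\cQ)$. By \cref{lem:hyperbolic-closure}, applied to the hyperbolic system $n\times\cQ$, there is a decomposition $V^{\oplus n}=U\oplus W$ with $n\times q$ vanishing on $U\times U$ and on $W\times W$.

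The next step is to deduce that such a form, possibly degenerate, has signature zero at every ordering $P$. Pass to the real closure $R_P$ of $(K,P)$, or argue directly with $P$-positive and $P$-negative subspaces. Write $N=V^{\oplus n}$ and $r=\rad(n\times q)$, so the form induces a nondegenerate form $\bar q$ on $N/r$. Let $N_+$ be a maximal subspace on which the form is $P$-positive definite. Then $N_+\cap U=0$, since the form vanishes identically on $U$, so $\dim N_+\leq\dim N-\dim U=\dim W$. Similarly $\dim N_+\leq\dim U$, and the same two bounds hold for $\dim N_-$. Hence
\[
2\max(\dim N_+,\dim N_-)\leq\dim U+\dim W=\dim N .
\]

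These bounds alone do not give $\dim N_+=\dim N_-$. We need $\dim N_+ +\dim N_- + \dim r=\dim N$, together with the Witt index of the induced form. The cleaner route is to note that the image $\bar U$ of $U$ in $N/r$ is totally isotropic for $\bar q$, and likewise $\bar W$, and that $\bar U+\bar W=N/r$. A nondegenerate form whose space is a sum of two totally isotropic subspaces satisfies $\dim\bar U,\dim\bar W\leq\frac12\dim(N/r)$. Since these two dimensions add up to at least $\dim(N/r)$, both equal exactly $\frac12\dim(N/r)$. Therefore $\bar q$ has a totally isotropic subspace of half its dimension, so $\bar q$ is hyperbolic and $\sgnop_P(n\times q)=0$.

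Finally, $\sgnop_P(n\times q)=n\,\sgnop_P(q)$, because the signature is additive on orthogonal sums, including the radical convention. Hence $\sgnop_P(q)=0$ for every $P$, which contradicts the hypothesis. The only point needing care is the degenerate-case step: checking that the images of $U$ and $W$ in $V^{\oplus n}/\rad$ remain totally isotropic and still span the quotient. Both facts are immediate, because the bilinear form descends to the quotient and the quotient map is surjective.
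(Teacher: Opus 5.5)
Your proof is correct and follows the same route as the paper's: you amplify via \cref{lem:amplification}, apply \cref{lem:hyperbolic-closure} to the hyperbolic system $n\times\cQ$, and conclude from $\sgnop_P(n\times q)=n\,\sgnop_P(q)$. Your passage to $V^{\oplus n}/\rad(n\times q)$ makes explicit a step the paper leaves implicit, namely that a possibly degenerate form that is hyperbolic in First's sense has zero signature; the preliminary positive/negative-subspace bounds are not needed and can be dropped.
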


\begin{proof}
Suppose $n\times\cQ$ is hyperbolic. If $q\in\Cl(\cQ)$, then $I_n\otimes q\in\Cl(n\times\cQ)$ by \cref{lem:amplification}; it is hyperbolic by \cref{lem:hyperbolic-closure}. Its signature is $n$ times the signature of $q$, so the latter must vanish at every ordering.
\end{proof}

We shall use the following two criteria of First \cite[Theorems~A and~B]{First2020}.

\begin{theorem}[First's general criterion]
\label{thm:first-A}
Let $\cQ$ be a finite system of quadratic forms over $K$. Then $\cQ$ is weakly hyperbolic if and only if
\[
 \totalSgn q_{A(\cQ),\sigma}=0.
\]
When these conditions hold, the smallest $n$ for which $n\times\cQ$ is hyperbolic is a power of $2$.
\end{theorem}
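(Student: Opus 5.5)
The statement is First's Theorem~A \cite[Theorem~A]{First2020}. The plan is to translate weak hyperbolicity of $\cQ$ into a statement about hermitian forms over the algebra with involution $(A(\cQ),\sigma)$. We then apply Pfister's local--global principle for hermitian forms and read off the signatures through the involution-trace form.

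\emph{Step 1 (transfer to hermitian forms).} Using the Hermitian-category formalism of \cite{QSS1979,BFFM2014}, one identifies systems of quadratic forms that are ``modeled on'' $\cQ$ with unimodular hermitian forms over $(A(\cQ),\sigma)$. Concretely, $n\times\cQ$ corresponds to the diagonal form $n\langle 1\rangle$ over $A=A(\cQ)$. The point to verify is that hyperbolic decompositions $V^{\oplus n}=U\oplus W$ correspond to complementary idempotents $e,f=1-e$ in $\Mat_n(A)$ with $e^\sigma=f$; the pair $(e_U,e_W^{\op})$ in the proof of \cref{lem:hyperbolic-closure} is exactly such an element. Thus $n\times\cQ$ is hyperbolic if and only if $n\langle1\rangle_A$ is hyperbolic.

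\emph{Step 2 (killing the radical).} Let $J=\Jac(A)$, which is $\sigma$-stable. Symmetric idempotents and hyperbolic splittings lift modulo the nilpotent ideal $J$ in characteristic $\neq2$. So $n\langle1\rangle_A$ is hyperbolic if and only if $n\langle1\rangle_{A/J}$ is. On the trace side, every element of $J$ is nilpotent, so left multiplication by it has trace zero. Hence $J$ lies in the radical of $q_{A,\sigma}$, and $q_{A,\sigma}$ is induced from the involution-trace form of $(A/J,\sigma)$ on each block, up to the behaviour of $\Tr_{A/K}$ versus $\Tr_{(A/J)/K}$, which agree on $A/J$ after filtering by powers of $J$. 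This reduces everything to semisimple $(A/J,\sigma)$.

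\emph{Step 3 (Pfister for semisimple algebras with involution).} Decompose $A/J$ into $\sigma$-simple factors. Hyperbolic ones (the factors $B\times B^{\op}$) contribute nothing to either side. On a simple factor with involution, the Pfister local--global principle for hermitian forms (Scharlau; Lewis--Unger) says $n\langle1\rangle$ is hyperbolic for some $n$ if and only if the hermitian signature of $\langle1\rangle$ vanishes at every ordering. Moreover, in that case the minimal such $n$ is a power of $2$, being the order of a torsion element of the hermitian Witt group. Since the direct sum of the factors is weakly hyperbolic if and only if each factor is, and the least common multiple of powers of $2$ is again a power of $2$, this gives the last assertion of the statement.

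\emph{Step 4 (signatures of the trace form).} This is the main obstacle. One must show that $\sgnop_P q_{A,\sigma}$ vanishes if and only if every hermitian signature of $\langle1\rangle$ on each factor vanishes at $P$. I would extend scalars to the real closure $K_P$. Then each factor becomes a product of $\Mat_m(D)$ with $D\in\{\bR_P,\bR_P(i),\mathbb H_P\}$ and $\sigma$ adjoint to a hermitian form $h$ of signature $s$. A direct computation of $\Tr(x^\sigma x)$ in these cases should give signature $c\,s^2$ for some constant $c>0$ depending on $D$. The total signature is then a sum of nonnegative terms, so it is zero exactly when all $s=0$. I expect the care to lie in checking the quaternionic and unitary cases, and in confirming that the identification in Step 1 matches signs correctly.
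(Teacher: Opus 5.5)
The paper gives no proof of this statement. It quotes it from \cite[Theorem~A]{First2020}, so your outline can only be compared with First's argument, and in structure it is that argument. You transfer to $n\langle1\rangle$ over $(A(\cQ),\sigma)$, reduce modulo $\Jac A(\cQ)$, and invoke the local--global principle and $2$-primary torsion for hermitian Witt groups of simple algebras with involution. You then finish with a Lewis--Tignol type identity $\sgnop_P T_\sigma=(\sgnop_P\sigma)^2$ for trace forms. Step~1 is correct and even elementary. Indeed, $A(n\times\cQ)\cong\Mat_n(A(\cQ))$ with the conjugate-transpose involution, and hyperbolic decompositions of $n\times\cQ$ are exactly the idempotents $e$ with $e+e^\sigma=1$. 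These idempotents are not ``symmetric'', but they do lift modulo $J$: normalize a skew lift $t_0$ of $\bar e-\tfrac12$ to $t_0(4t_0^2)^{-1/2}$.

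One claim in Step~2 is false: $\Tr_{A/K}$ and $\Tr_{(A/J)/K}$ do not agree on $A/J$. Filtering by powers of $J$ gives $\Tr_{A/K}(a)=\sum_k\Tr_K\bigl(\bar a\mid J^k/J^{k+1}\bigr)$, and the layers $J^k/J^{k+1}$ are not copies of the regular $A/J$-module. The paper's own example shows the discrepancy. By \cref{prop:radical-structure,prop:trace-form}, $A(\cQ)/J\cong K^4$ has trace $x+u+p+s$, whereas $\Tr_{A(\cQ)/K}(a)=3x+7u+7p+s$. Hence $q_{A(\cQ),\sigma}=3x^2+7u^2+8ps$, not the involution-trace form $x^2+u^2+2ps$ of $A/J$.

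What is true is that the functional induced on $A/J$ is $\sum_j m_j\chi_j$. Here $\chi_j$ is the character of the simple module $S_j$, and $m_j\geq1$ is its multiplicity in $\bigoplus_k J^k/J^{k+1}$. Consequently, on each $\sigma$-stable simple factor $B$, the form $q_{A,\sigma}$ is a positive rational multiple of $b\mapsto\Tr_{B/K}(\sigma(b)b)$. On an exchanged pair $B\times B^{\op}$ the two weights may differ (the $7$ and $1$ that produce $8ps$), but the form still vanishes on both factors and has signature $0$. So $\sgnop_P q_{A,\sigma}$ is a positive combination of the factor contributions, not their sum. Your Step~4 shows each contribution is a nonnegative $c\,s^2$, so this weaker statement is all you need, and the equivalence survives the correction.

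When you carry out Step~4, also include the complex places of the centre. These give $\Mat_m(K_P(i))$ with an involution of the first kind, which contributes $0$ and has torsion Witt group. Include as well the exchange pairs that appear after base change. Finally, the Lewis--Unger principle refers to orderings of the fixed field of the centre; your base change to $K_P$ handles this by summing over the extensions of $P$.
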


\begin{theorem}[First's closure criterion]
\label{thm:first-B}
Suppose that $K$ is a number field or a real closed field, and let $\cQ=\{q_1,q_2\}$ be a nonsingular pair. Then $\cQ$ is weakly hyperbolic if and only if
\[
 \totalSgn q=0
 \qquad\text{for every }q\in\Cl(\cQ).
\]
\end{theorem}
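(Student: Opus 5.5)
This statement is cited from First \cite[Theorem~B]{First2020}. If I had to reprove it, I would first dispose of one implication. If $\cQ$ is weakly hyperbolic, then every $q\in\Cl(\cQ)$ has $\totalSgn q=0$ by \cref{cor:closure-obstruction}. The work lies in the converse. Assume $\cQ$ is not weakly hyperbolic; I would produce $q\in\Cl(\cQ)$ with nonzero signature at some ordering.

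By \cref{thm:first-A}, there is an ordering $P$ at which $q_{A(\cQ),\sigma}$ has nonzero signature. Both $A(\cQ)$ and $\Cl(\cQ)$ are solution spaces of $K$-linear equations, so they commute with scalar extension. For a number field, $P$ is a real place, and I would pass to its completion, a real closed field $R$. Signatures are preserved under this extension. A closure form over $R$ with nonzero signature can be approximated by a $K$-rational element of $\Cl(\cQ)$ because $\Cl(\cQ)_R=\Cl(\cQ)\otimes R$ and signature is locally constant on nondegenerate forms. Some care is needed with degenerate forms; I would perturb within the closure by adding a small multiple of a nondegenerate pencil member. This reduces everything to $K$ real closed.

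Over a real closed (hence infinite) field, nonsingularity lets me replace $\{q_1,q_2\}$ by a basis of the same pencil with $q_1$ nondegenerate. This changes neither $A(\cQ)$ nor $\Cl(\cQ)$. Put $T=q_1^{-1}q_2$, which is self-adjoint for $q_1$. The adjoint equations force $\psi^t=q_1\phi q_1^{-1}$ with $\phi T=T\phi$. Hence $A(\cQ)$ is the centralizer $C(T)\subseteq\End(V)$, with involution the $q_1$-adjoint. Similarly, $q\in\Cl(\cQ)$ iff $q_1^{-1}q$ commutes with $C(T)$ and $q$ is symmetric. By the double centralizer theorem for a single endomorphism, $C(C(T))=K[T]$, so
\[
 \Cl(\cQ)=\{q_1f(T):f\in K[x]\}.
\]
Symmetry is automatic because $T$ is $q_1$-self-adjoint.

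The remaining and hardest step is a signature calculation. I would decompose $V$ into the primary components of $T$ for the irreducible factors $p$, which are linear or quadratic over $R$. These components are $q_1$-orthogonal, so both $C(T)$ and the trace form split accordingly. On each component, the Jordan-type filtration by $\ker p(T)^k$ yields, for each block size $k$, a nondegenerate hermitian form $h_{p,k}$ over the residue field $R[x]/(p)$. I expect the main obstacle to be showing that
\[
 \sgnop q_{A,\sigma}=\sum c_{p,k}\,(\sgnop h_{p,k})^2
\]
with $c_{p,k}>0$; the radical of $C(T)$ should contribute zero signature, and the semisimple quotient gives matrix algebras with the induced involutions. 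Granting this, nonzero trace signature yields some $h_{p,k}$ with $\sgnop h_{p,k}\ne0$. I would then take $f=e_p\,p^{k-1}g$, where $e_p$ is the polynomial idempotent for the $p$-component and $g$ is chosen to isolate the block size $k$ (or a suitable unit). I would check that $q_1f(T)$ restricts, on the relevant quotient, to a scaled transfer of $h_{p,k}$. Its signature is then nonzero, which completes the proof.
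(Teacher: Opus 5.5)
The paper gives no proof of this statement. It is quoted from First \cite[Theorem~B]{First2020} and used as a black box, so your sketch is an independent reproof.

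The easy direction is fine. The real-closed core is a sound outline. With $q_1$ nondegenerate and $T=q_1^{-1}q_2$, $A(\cQ)$ is the centralizer of $T$ with the $q_1$-adjoint involution, and $\Cl(\cQ)=q_1K[T]$ by the bicommutant theorem. The form $q_{A(\cQ),\sigma}$ factors through $A(\cQ)/\Jac A(\cQ)$, giving $\sgnop q_{A(\cQ),\sigma}=\sum c_{\lambda,k}s_{\lambda,k}^2$ with $c_{\lambda,k}>0$. Here $s_{\lambda,k}$ is the signature of $(x,y)\mapsto q_1(x,(T-\lambda)^{k-1}y)$ on the top layer of the size-$k$ Jordan blocks at a real eigenvalue $\lambda$. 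The genuine gap is the descent from a number field to its real completion.

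You propose to push a real closure form of nonzero signature into the nondegenerate locus by adding a small multiple of a nondegenerate pencil member, and then approximate. This fails, and the paper's own pair shows why. In $\Cl(\cR)=\Span_K\{R_1,R_2,R_3\}$ one has $\det(\alpha R_1+\beta R_2+\gamma R_3)=\alpha^4$. For $\alpha\neq0$ the form is $\langle-\alpha,\alpha\rangle$ plus a hyperbolic plane. For $\alpha=0$ its signature is nonzero only if $\beta=0\neq\gamma$. So the closure forms of nonzero signature are exactly the nonzero multiples of $R_3$. Every nondegenerate closure form has signature $0$, and every perturbation off that line kills the signature.

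Here $R_3$ happens to be rational. In general, though, the real form you build from $e_p$ with $p=x-\lambda$, $\lambda\notin K$, lies on a degenerate stratum whose radical is not defined over $K$. That stratum contains no $K$-rational form at all, so no density argument reaches it.

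What is needed is to work on a $K$-rational stratum. Take a $K$-irreducible factor $\pi$ of the minimal polynomial of $T$, its primary idempotent $e_\pi\in K[x]$, and $j\geq0$. The forms $q_1e_\pi(T)\pi(T)^jh(T)$ with $h$ prime to $\pi$ all share one radical, and
\[
 \sgnop_P\bigl(q_1e_\pi(T)\pi(T)^jh(T)\bigr)
 =\sum_{\lambda}\operatorname{sign}\bigl(\pi'(\lambda)^jh(\lambda)\bigr)S_{\lambda,j},
 \qquad
 S_{\lambda,j}=s_{\lambda,j+1}+s_{\lambda,j+3}+\cdots,
\]
where the sum runs over the real roots $\lambda$ of $\pi$ at $P$. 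A polynomial over $K$ cannot separate $\lambda$ from its real $K$-conjugates. Instead, choose $h$ in the number field $K[x]/(\pi)$ with prescribed signs at these real embeddings, using its density in $K[x]/(\pi)\otimes_K\bR$. Since $S_{\lambda,k-1}-S_{\lambda,k+1}=s_{\lambda,k}$, some $S_{\lambda,j}$ is nonzero. Aligning the signs then gives a $K$-rational closure form of nonzero signature. This sign-prescription step is where the arithmetic of $K$ enters, and your sketch does not provide it.

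There are also two smaller errors in the real-closed step.
\begin{enumerate}
\item No polynomial in $T$ can isolate one block size. A polynomial acts on a block of size $k'$ through its Taylor expansion at $\lambda$ to order $k'-1$. So $q_1e_p(T)p(T)^{k-1}$ is not a transfer of $h_{p,k}$ when larger blocks exist; its signature is $S_{\lambda,k-1}$. Choosing $k$ maximal with $s_{\lambda,k}\neq0$ repairs this.
\item For a quadratic factor $p$, the involution fixes $R[x]/(p)=R(\sqrt{-1})$, because $T$ is self-adjoint. The layer forms are therefore symmetric bilinear over $R(\sqrt{-1})$ and contribute $0$ to the trace signature. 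They are not hermitian forms carrying a signature.
\end{enumerate}
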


The first criterion applies to arbitrary finite systems. The second uses the signatures of forms in the adjoint closure and assumes that the pair is nonsingular.

To remove hyperbolic summands, we use the Hermitian-category interpretation of systems of forms \cite{BFFM2014,QSS1979}. For a fixed finite index set, First's equivalence \cite[Theorem~5.1]{First2020} identifies systems with unimodular Hermitian spaces in a twisted-arrow category and preserves orthogonal sums and hyperbolicity. In this category, idempotents split componentwise and endomorphism rings are finite-dimensional $K$-algebras. These rings are complete semilocal, with $2$ invertible. Proposition~5.12 of \cite{BFFM2014} therefore applies: a unimodular Hermitian space has zero Witt class if and only if it is hyperbolic.

\begin{lemma}[Removing a hyperbolic summand]
\label{lem:remove-hyperbolic}
Let $\cR$ and $\cS$ be systems of quadratic forms over $K$ with the same finite index set. If $\cS$ is hyperbolic, then
\[
 \cR\perp\cS\text{ is weakly hyperbolic}
 \quad\Longleftrightarrow\quad
 \cR\text{ is weakly hyperbolic}.
\]
\end{lemma}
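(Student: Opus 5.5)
The plan is to pass to Hermitian spaces via First's equivalence \cite[Theorem~5.1]{First2020}. Under this equivalence, systems with the given index set correspond to unimodular Hermitian spaces in the twisted-arrow category. Orthogonal sums go to orthogonal sums, and hyperbolic systems go to hyperbolic spaces. Write $[\cdot]$ for the Witt class. Since $n\times\cQ$ corresponds to the $n$-fold sum, $\cQ$ is weakly hyperbolic exactly when $n[\cQ]=0$ for some $n\ge1$. For this step I will use Proposition~5.12 of \cite{BFFM2014}, quoted above: a unimodular space with zero Witt class is hyperbolic. The converse, that hyperbolic spaces have zero Witt class, holds by definition.

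Suppose $\cS$ is hyperbolic. Then $[\cS]=0$, so
\[
 [n\times(\cR\perp\cS)]=n[\cR]+n[\cS]=n[\cR]
\]
for every $n\geq 1$. Hence $n\times(\cR\perp\cS)$ is hyperbolic if and only if $n[\cR]=0$, which happens if and only if $n\times\cR$ is hyperbolic. Both applications of Proposition~5.12 are valid, because $n\times\cR$ and $n\times(\cR\perp\cS)$ are themselves systems and so correspond to unimodular spaces. Taking the existence of such an $n$ on both sides gives the equivalence.

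Two points need checking. First, the Witt group must be defined so that the class of a space is zero exactly when it is stably hyperbolic or metabolic. Proposition~5.12 settles this, since it identifies zero Witt class with hyperbolicity in our setting. Second, we need the additivity used above, namely $[X\perp H]=[X]$ for hyperbolic $H$, and that $n\times$ corresponds to $n$-fold orthogonal sum. Both follow from the equivalence preserving orthogonal sums.

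The direction ``$\cR$ weakly hyperbolic $\Rightarrow$ $\cR\perp\cS$ weakly hyperbolic'' is elementary and needs no category theory. If $n\times\cR$ is hyperbolic via $U\oplus W$ and $\cS$ via $U'\oplus W'$, then $n\times\cS$ is hyperbolic via $nU'\oplus nW'$. The sums $(U\oplus nU')\oplus(W\oplus nW')$ then give a hyperbolic decomposition of $n\times(\cR\perp\cS)$. The converse is the main obstacle, because a hyperbolic decomposition of $\cR\perp\cS$ need not respect the summands. That cancellation is exactly what the Witt-class argument through \cite{BFFM2014} supplies.
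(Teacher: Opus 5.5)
Your proposal is correct and follows the paper's proof. The easy direction combines the hyperbolic decompositions of $n\times\cR$ and $n\times\cS$. The converse passes to Witt classes via First's equivalence, uses $[\cS]=0$ to get $n[\cR]=0$, and then applies Proposition~5.12 of \cite{BFFM2014} to conclude that $n\times\cR$ is hyperbolic.
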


\begin{proof}
If $n\times\cR$ is hyperbolic, then $n\times\cS$ is hyperbolic and so is their orthogonal sum.

Conversely, suppose $n\times(\cR\perp\cS)$ is hyperbolic. Write $[\cR]$ and $[\cS]$ for the Witt classes of the corresponding Hermitian spaces. First's equivalence gives
\[
 n[\cR]+n[\cS]=0.
\]
Since $[\cS]=0$, we have $n[\cR]=0$. By \cite[Proposition~5.12]{BFFM2014}, the Hermitian space corresponding to $n\times\cR$ is hyperbolic. Hence $n\times\cR$ is hyperbolic as a system.
\end{proof}

We now fix the singular blocks used in the Kronecker decomposition. For $\varepsilon\geq1$, set
\[
 E_\varepsilon=K^{\varepsilon+1},
 \qquad
 F_\varepsilon=K^\varepsilon,
\]
with bases $e_0,\ldots,e_\varepsilon$ and $f_0,\ldots,f_{\varepsilon-1}$. Let
\[
 L_{\varepsilon,0},L_{\varepsilon,1}:F_\varepsilon\longrightarrow E_\varepsilon
\]
be the inclusions
\[
 L_{\varepsilon,0}(f_j)=e_j,
 \qquad
 L_{\varepsilon,1}(f_j)=e_{j+1}
 \qquad(0\leq j\leq\varepsilon-1).
\]
The \emph{symmetric singular Kronecker block of minimal index $\varepsilon$} is the pair
\[
 M_\varepsilon
 =
 \{S_{\varepsilon,1},S_{\varepsilon,2}\}
\]
on $E_\varepsilon\oplus F_\varepsilon$, where
\begin{equation}
\label{eq:general-kronecker-block}
 S_{\varepsilon,1}
 =
 \begin{pmatrix}
 0&L_{\varepsilon,0}\\
 L_{\varepsilon,0}^t&0
 \end{pmatrix},
 \qquad
 S_{\varepsilon,2}
 =
 \begin{pmatrix}
 0&L_{\varepsilon,1}\\
 L_{\varepsilon,1}^t&0
 \end{pmatrix}.
\end{equation}
Its dimension is $2\varepsilon+1$. For $\varepsilon=0$, set $E_0=K$, $F_0=0$, and let $M_0$ be the one-dimensional zero pair.

Each $M_\varepsilon$ is hyperbolic for the decomposition
\[
 E_\varepsilon\oplus F_\varepsilon,
\]
and every member of its pencil is singular. The classical reduction theory for pairs of symmetric bilinear forms gives an orthogonal decomposition
\begin{equation}
\label{eq:kronecker-decomposition}
 \cQ
 \cong
 \cR\perp
 M_{\varepsilon_1}\perp\cdots\perp M_{\varepsilon_s},
\end{equation}
where $\cR$ is the regular, or nonsingular, part and the integers $\varepsilon_i\geq0$ are the singular minimal indices; see \cite{LeepSchueller1999,Waterhouse1976}. The regular part may have dimension zero, and $s=0$ exactly when the pair is nonsingular. The congruence class of each singular block is determined by its minimal index. We shall not need the classification of the regular part.

Pairs are indexed by $1,2$, so an invertible $K$-linear recombination is applied to both members in that order. Such a recombination preserves the adjoint algebra, adjoint closure, hyperbolicity, weak hyperbolicity, and singular minimal indices. Over an infinite field, every nonsingular pair has a nondegenerate member in its pencil \cite[Lemma~1.2]{First2020}; we may therefore normalize so that $R_1$ is nondegenerate whenever
\[
 J=R_1^{-1}R_2
\]
is used. The recombination is applied to every summand in \eqref{eq:kronecker-decomposition}. Congruences on the singular summands then restore the standard blocks in \eqref{eq:general-kronecker-block}, since their minimal indices are unchanged \cite{LeepSchueller1999,Waterhouse1976}.

\section{Construction of the counterexample}
\label{sec:construction}

We begin with First's four-dimensional example. On $K^4$, define
\begin{equation}
\label{eq:R-matrices}
 R_1=
 \begin{pmatrix}
 -1&0&0&0\\
 0&0&0&1\\
 0&0&1&0\\
 0&1&0&0
 \end{pmatrix},
 \qquad
 R_2=
 \begin{pmatrix}
 0&0&0&0\\
 0&0&0&0\\
 0&0&0&1\\
 0&0&1&0
 \end{pmatrix},
\end{equation}
and put
\[
 R_3=\operatorname{diag}(0,0,0,1),
 \qquad
 \cR=\{R_1,R_2\}.
\]
First considered this pair over $\bR$ in \cite[Example~2.2]{First2020}. The matrices define the same example over any field of characteristic different from $2$. We give the pencil and closure calculations over $K$, using the notation needed for the singular extension.

\begin{proposition}
\label{prop:R-pencil}
Every form in the pencil $\Span_K\{R_1,R_2\}$ is hyperbolic. Moreover,
\[
 \det(aR_1+bR_2)=a^4.
\]
In particular, $\cR$ is nonsingular.
\end{proposition}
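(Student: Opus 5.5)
The plan is to compute directly with the Gram matrix
\[
 aR_1+bR_2=
 \begin{pmatrix}
 -a&0&0&0\\
 0&0&0&a\\
 0&0&a&b\\
 0&a&b&0
 \end{pmatrix},
 \qquad
 q_{a,b}(x)=-ax_1^2+2ax_2x_4+ax_3^2+2bx_3x_4 .
\]

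\textbf{Determinant and nonsingularity.} Expand along the first row. This gives $-a$ times the determinant of the lower right $3\times3$ block. That block is anti-triangular, with anti-diagonal entries $a,a,a$ and zeros above the anti-diagonal, so its determinant is $-a^3$. Hence $\det(aR_1+bR_2)=a^4$. Taking $a=1$ already gives a nondegenerate member, namely $R_1$ itself. So $\cR$ is nonsingular in the sense of \cref{def:hyperbolic-systems}, with $L=K$.

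\textbf{Hyperbolicity of each pencil member.} For each $(a,b)$ I will exhibit an explicit decomposition $K^4=U\oplus W$ into subspaces on which $q_{a,b}$ vanishes identically.

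If $a=0$, the form is $2bx_3x_4$. I take $U=\Span\{e_1,e_2,e_3\}$ and $W=\Span\{e_4\}$. Both are totally isotropic. This uses First's convention that degenerate forms and unequal dimensions are allowed.

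If $a\neq0$, I take
\[
 U=\Span\{e_1+e_3,\ e_2\}.
\]
Its spanning vectors are isotropic because $-a+a=0$ and $q(e_2)=0$. They are mutually orthogonal because the Gram entries in positions $(1,2)$ and $(3,2)$ vanish.

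For $W$, I first take $w_1=e_1-e_3$, which is isotropic. I then look for a second vector of the form $w_2=e_4+se_1+re_3+ue_2$ and impose two conditions:
\[
 B(w_1,w_2)=-as-b-ar=0,
 \qquad
 q(w_2)=-as^2+ar^2+2br+2au=0.
\]
The choice $r=0$, $s=-b/a$, $u=b^2/(2a^2)$ satisfies both; the second condition reads $-b^2/a+b^2/a=0$. This is legitimate because $\operatorname{char}K\neq2$. With this choice $W=\Span\{w_1,w_2\}$ is totally isotropic.

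The four vectors $e_1+e_3$, $e_2$, $e_1-e_3$, $w_2$ form a basis of $K^4$, since $w_2$ is the only one involving $e_4$. Hence $K^4=U\oplus W$.

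The only step that needs any search is finding the second isotropic vector $w_2$ when $b\neq0$. The naive choices $e_4+s(e_1+e_3)$ fail, and the fix is to add an $e_2$-component, which adjusts $q(w_2)$ without changing $B(w_1,w_2)$. Everything else is a routine check of Gram-matrix entries.
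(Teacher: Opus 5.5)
Your proof is correct and is essentially the paper's argument: the same determinant computation and, for $a\neq0$, the same totally isotropic plane $\Span_K\{e_2,e_1+e_3\}$. The only differences are that you construct the complementary isotropic plane $\Span_K\{e_1-e_3,w_2\}$ by hand where the paper cites the standard fact that a nondegenerate four-dimensional form with a two-dimensional totally isotropic subspace is hyperbolic, and that for $a=0$ you use $\Span_K\{e_1,e_2,e_3\}\oplus\Span_K\{e_4\}$ instead of the paper's $\Span_K\{e_1,e_3\}\oplus\Span_K\{e_2,e_4\}$; note also that your basis check needs the independence of $e_1+e_3$, $e_2$, $e_1-e_3$, which uses $\operatorname{char}K\neq2$.
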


\begin{proof}
For $a,b\in K$,
\[
 aR_1+bR_2=
 \begin{pmatrix}
 -a&0&0&0\\
 0&0&0&a\\
 0&0&a&b\\
 0&a&b&0
 \end{pmatrix},
\]
whose determinant is $a^4$. If $a\neq0$, the form is nondegenerate and the subspace
\[
 L=\Span_K\{e_2,e_1+e_3\}
\]
is totally isotropic of dimension $2$. A nondegenerate four-dimensional form with a two-dimensional totally isotropic subspace is hyperbolic.

If $a=0$, then $bR_2$ vanishes on both
\[
 U=\Span_K\{e_1,e_3\},
 \qquad
 W=\Span_K\{e_2,e_4\},
\]
and $K^4=U\oplus W$. Thus the degenerate form $bR_2$ is hyperbolic in the sense used for systems.
\end{proof}

We next compute the adjoint algebra and closure of $\cR$.

\begin{proposition}
\label{prop:R-adjoint-closure}
The elements of $A(\cR)$ are precisely the pairs $(\Phi_R,\Psi_R^{\op})$ with
\begin{equation}
\label{eq:R-adjoints}
 \Phi_R=
 \begin{pmatrix}
 x&0&0&-z\\
 -y&u&v&w\\
 0&0&u&v\\
 0&0&0&u
 \end{pmatrix},
 \qquad
 \Psi_R=
 \begin{pmatrix}
 x&0&0&y\\
 z&u&v&w\\
 0&0&u&v\\
 0&0&0&u
 \end{pmatrix},
\end{equation}
where $x,y,z,u,v,w\in K$. Furthermore,
\begin{equation}
\label{eq:R-closure}
 \Cl(\cR)
 =
 \Span_K\{R_1,R_2,R_3\}.
\end{equation}
\end{proposition}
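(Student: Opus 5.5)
Since $R_1$ is nondegenerate, I will solve the adjoint equations $\Psi^tR_i=R_i\Phi$ $(i=1,2)$ by eliminating $\Psi$. The equation for $i=1$ gives $\Psi^t=R_1\Phi R_1^{-1}$, so $\Psi$ is determined by $\Phi$. Substituting into the equation for $i=2$ gives $R_1\Phi R_1^{-1}R_2=R_2\Phi$. Left-multiplying by $R_1^{-1}$, this becomes the commutation condition
\[
 \Phi J=J\Phi,\qquad J=R_1^{-1}R_2.
\]
Here $R_1^{-1}=R_1$, and $J$ sends $e_3\mapsto e_2$, $e_4\mapsto e_3$ and kills $e_1,e_2$. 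Thus $J$ is nilpotent with Jordan type $(3,1)$: one chain $e_4\mapsto e_3\mapsto e_2\mapsto 0$ and the fixed line $Ke_1$ in its kernel.

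The centralizer of such a nilpotent has dimension $3+1+1+1=6$, matching the six parameters $x,y,z,u,v,w$. I will compute it by writing $\Phi$ in block form relative to the two chains:
\begin{itemize}[label=\textendash]
\item upper-triangular Toeplitz on the $3$-chain, giving the entries $u,v,w$;
\item a scalar on $e_1$, giving $x$;
\item a map from $e_1$ into $\ker J$ of the long chain, i.e.\ into $Ke_2$, giving the $(2,1)$ entry;
\item a map from the long chain to $Ke_1$ that factors through the top vector, i.e.\ is nonzero only on $e_4$, giving the $(1,4)$ entry.
\end{itemize}
Naming the last two entries $-y$ and $-z$ gives $\Phi_R$. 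Then $\Psi_R=(R_1\Phi_RR_1)^t$, and a direct multiplication should reproduce the stated $\Psi_R$. The main bookkeeping risk is the signs and the ordering $e_2,e_3,e_4$ of the chain, so I will verify both adjoint equations directly on the final matrices.

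For the closure, I will take a symmetric $T=(t_{ij})$ and impose $\Psi^tT=T\Phi$ for each of the six basis elements of $A(\cR)$ (one parameter equal to $1$, the rest $0$). A cleaner route is to write $T=R_1C$. The condition for $\Phi$ then becomes $R_1\Phi R_1^{-1}\cdot R_1C=R_1C\Phi$, i.e.\ $\Phi C=C\Phi$, for every $\Phi$ in the centralizer of $J$. So $C$ lies in the double centralizer of $J$, and $T$ must also be symmetric.
\begin{itemize}[label=\textendash]
\item The double centralizer of $J$ is $K[J]$ only when $J$ is cyclic; here it is not, so the double centralizer is larger.
\item In fact it contains the idempotent projecting onto the $1$-block, and it is spanned by $I$, $J$, $J^2$ and the projection $e_{11}$.
\end{itemize}
This gives the candidate forms $R_1$, $R_1J=R_2$, $R_1J^2$ and $R_1e_{11}=\operatorname{diag}(-1,0,0,0)$. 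All four are symmetric: $R_1J^2$ is the rank-one form with $(4,4)$ entry $1$, namely $R_3$. Note that $I=-e_{11}+(\text{projection onto the chain})$. So the span is $\{R_1,R_2,R_3,\operatorname{diag}(1,0,0,0)\}$, which is four-dimensional, whereas the proposition claims a three-dimensional closure.

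I expect this to be the main obstacle. Either I have mis-set the double-centralizer computation, or the $y,z$ entries, which mix the two blocks, kill the projection. They do: the double centralizer must commute with the $y$- and $z$-type maps, and commuting with the map $e_1\mapsto e_2$ forces the scalar on $e_1$ to equal the scalar on the chain. So the double centralizer of a nilpotent with Jordan type $(3,1)$ is exactly $K[J]$, which is three-dimensional. I will prove this by imposing commutation with the two off-block generators. Then $\Cl(\cR)=R_1K[J]=\Span_K\{R_1,R_2,R_3\}$, and each of these forms is visibly symmetric.
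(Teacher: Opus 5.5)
Your proposal is correct. Your computation of $A(\cR)$ follows the paper: eliminate $\Psi$ via $\Psi^t=R_1\Phi R_1^{-1}$ (here $R_1^{-1}=R_1$), then solve $\Phi J=J\Phi$. Your Jordan-chain description of $\Cent(J)$ (the chain $e_4\mapsto e_3\mapsto e_2\mapsto 0$ together with $Ke_1$) is right, and $(R_1\Phi_RR_1)^t$ does reproduce $\Psi_R$.

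For the closure you take a different route from the paper. The paper imposes $\Psi_R^tT=T\Phi_R$ on a symmetric $T$ and solves the resulting linear equations entry by entry. You instead substitute $T=R_1C$, which turns the closure condition into $C\in\Cent(\Cent(J))$. The paper uses this device only later, in \cref{lem:dimension-two-closure}. Your route explains the answer structurally. For any pair with $R_1$ nondegenerate, $\Cl(\cR)=R_1K[J]$, and every $R_1J^k$ is symmetric because $(J^t)^kR_1=(R_2R_1^{-1})^kR_1=R_1J^k$. So the extra form $R_3=R_1J^2$ appears exactly because $J$ has minimal polynomial $t^3$, of degree $3>2$. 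The paper's computation, by contrast, is a finite check that needs no structure theory.

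Your first two bullets are false, and they should be deleted rather than left in and then overridden.
\begin{enumerate}[label=\textup{(\arabic*)}]
\item The bicommutant of every square matrix over a field equals $K[J]$, whether or not the matrix is cyclic; this is the classical double-centralizer theorem for a single matrix.
\item The projection $e_{11}$ is never in $\Cent(\Cent(J))$, since $e_{11}E_{21}=0\neq E_{21}=E_{21}e_{11}$.
\item Also, $I=e_{11}+(\text{projection onto the chain})$, not $-e_{11}+\cdots$.
\end{enumerate}

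To finish, either cite the double-centralizer theorem, or make your final check self-contained as follows.
\begin{enumerate}[label=\textup{(\roman*)}]
\item Since $J\in\Cent(J)$, the matrix $C$ commutes with $J$, so it already has the six-parameter shape of $\Phi_R$.
\item Commuting with the $x$-basis element $e_{11}$ gives $Ce_{11}-e_{11}C=-y'E_{21}+z'E_{14}$, which kills the $y$- and $z$-entries of $C$.
\item Commuting with $E_{21}$ then gives $(u'-x')E_{21}=0$, so the $x$-entry equals the $u$-entry.
\end{enumerate}
This leaves $C=u'I+v'J+w'J^2$, so $C\in\Span_K\{I,J,J^2\}$ and $T\in\Span_K\{R_1,R_2,R_3\}$.
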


\begin{proof}
Since $R_1$ is invertible, the first adjoint equation determines
\[
 \Psi_R^t=R_1\Phi_RR_1^{-1}.
\]
The second is then equivalent to $\Phi_RJ=J\Phi_R$, where
\[
 J=R_1^{-1}R_2=
 \begin{pmatrix}
 0&0&0&0\\
 0&0&1&0\\
 0&0&0&1\\
 0&0&0&0
 \end{pmatrix}.
\]
Solving this commutation relation gives the six-parameter matrix $\Phi_R$ in \eqref{eq:R-adjoints}; the first adjoint equation then gives $\Psi_R$. Substitution verifies the converse. We also have $J^3=0$ and $R_1J^2=R_3$.

Let $T=(t_{ij})$ be symmetric and suppose $T\in\Cl(\cR)$. Imposing
\[
 \Psi_R^tT=T\Phi_R
\]
for all values of $x,y,z,u,v,w$ gives
\[
 t_{12}=t_{13}=t_{14}=t_{22}=t_{23}=0,
 \qquad
 t_{24}=-t_{11},
 \qquad
 t_{33}=-t_{11},
\]
while $t_{34}$ and $t_{44}$ remain free. Thus
\[
 T=
 \begin{pmatrix}
 -\alpha&0&0&0\\
 0&0&0&\alpha\\
 0&0&\alpha&\beta\\
 0&\alpha&\beta&\gamma
 \end{pmatrix}
 =\alpha R_1+\beta R_2+\gamma R_3.
\]
Conversely, every linear combination of $R_1,R_2,R_3$ satisfies the closure equations, so \eqref{eq:R-closure} follows.
\end{proof}

\begin{corollary}
\label{cor:R-not-weak}
If $K$ is formally real, then $\cR$ is not weakly hyperbolic.
\end{corollary}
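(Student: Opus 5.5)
By \cref{cor:closure-obstruction}, it suffices to exhibit a form in $\Cl(\cR)$ whose signature is nonzero at some ordering of $K$. By \cref{prop:R-adjoint-closure}, $R_3=\operatorname{diag}(0,0,0,1)$ lies in $\Cl(\cR)$, so I would take this as the candidate.

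To compute its signature, fix an ordering $P$ of $K$. The radical of $R_3$ is $\Span_K\{e_1,e_2,e_3\}$. The induced nondegenerate form on $K^4/\rad(R_3)$ is the one-dimensional form $\langle 1\rangle$, so $\sgnop_P(R_3)=1\neq0$ for every ordering $P$.

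\cref{cor:closure-obstruction} then shows that $\cR$ is not weakly hyperbolic. All the substantive work is in the closure computation of \cref{prop:R-adjoint-closure}, which is already available, so no step here is an obstacle.

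As an alternative route, one could use \cref{thm:first-A} when $K$ is a number field or a real closed field. However, this would require computing the trace form, and the closure argument above is more direct and holds over every formally real field.
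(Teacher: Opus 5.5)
Your proof is correct and is exactly the paper's argument: $R_3\in\Cl(\cR)$ has signature $1$ at every ordering, so \cref{cor:closure-obstruction} applies. One small correction to your aside: \cref{thm:first-A} holds over every field of characteristic different from $2$, and the restriction to number fields and real closed fields belongs to \cref{thm:first-B}.
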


\begin{proof}
The form $R_3$ belongs to $\Cl(\cR)$ and has signature $1$ at every ordering of $K$. Apply \cref{cor:closure-obstruction}.
\end{proof}

For the singular summand, take the block $M_1$. On $K^3$, define
\begin{equation}
\label{eq:S-matrices}
 S_1=
 \begin{pmatrix}
 0&0&1\\
 0&0&0\\
 1&0&0
 \end{pmatrix},
 \qquad
 S_2=
 \begin{pmatrix}
 0&0&0\\
 0&0&1\\
 0&1&0
 \end{pmatrix},
\end{equation}
and put $\cS=\{S_1,S_2\}$.

\begin{proposition}
\label{prop:S-basic}
The system $\cS$ is hyperbolic, and every member of its pencil is singular.
\end{proposition}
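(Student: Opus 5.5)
The proof is a direct check in coordinates, with two parts: exhibit a hyperbolic decomposition, and show that every member of the pencil has zero determinant. Write $e_1,e_2,e_3$ for the standard basis of $K^3$.

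For hyperbolicity we take
\[
 U=\Span_K\{e_1,e_2\},
 \qquad
 W=\Span_K\{e_3\},
\]
which is the decomposition $E_1\oplus F_1$ of the block $M_1$. Clearly $K^3=U\oplus W$. Both $S_1$ and $S_2$ have nonzero entries only in positions coupling $e_3$ with $e_1$ or $e_2$. Hence the top-left $2\times2$ block of each matrix is zero, so both forms vanish on $U\times U$. The $(3,3)$ entry of each matrix is also zero, so both forms vanish on $W\times W$. By \cref{def:hyperbolic-systems}(i), $\cS$ is hyperbolic.

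For singularity, consider a general member of the pencil:
\[
 aS_1+bS_2=
 \begin{pmatrix}
 0&0&a\\
 0&0&b\\
 a&b&0
 \end{pmatrix}.
\]
This matrix has rank at most $2$, since its first two columns are both multiples of $e_3$. Equivalently, the vector $be_1-ae_2$ lies in its kernel. This remains true after any field extension $L/K$, where the same formula applies with $a,b\in L$. So no member of $\Span_L\{(S_1)_L,(S_2)_L\}$ is nondegenerate, and the pair is singular in the sense of \cref{def:hyperbolic-systems}(iii).

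There is no real obstacle here. The only point needing care is that singularity must be verified over every extension $L$, not just over $K$. The determinant argument settles this uniformly, because the determinant of $aS_1+bS_2$ is identically zero as a polynomial in $a$ and $b$.
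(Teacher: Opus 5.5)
Your proposal is correct and follows essentially the same route as the paper: the decomposition $\Span_K\{e_1,e_2\}\oplus\Span_K\{e_3\}$ for hyperbolicity, and the identically vanishing determinant of $aS_1+bS_2$ for singularity. Your explicit remark that this persists over every extension $L/K$ is a harmless addition; the paper makes that point only later, in \cref{prop:Q-singular-pencil}.
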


\begin{proof}
Let $f_1,f_2,f_3$ be the standard basis of $K^3$, and set
\[
 U=\Span_K\{f_1,f_2\},
 \qquad
 W=\Span_K\{f_3\}.
\]
Both $S_1$ and $S_2$ vanish on $U$ and on $W$, and $K^3=U\oplus W$. Hence $\cS$ is hyperbolic.

For $a,b\in K$,
\[
 aS_1+bS_2=
 \begin{pmatrix}
 0&0&a\\
 0&0&b\\
 a&b&0
 \end{pmatrix},
\]
which has determinant zero.
\end{proof}

\begin{proposition}
\label{prop:S-adjoint-closure}
The elements of $A(\cS)$ are precisely the pairs $(\Phi_S,\Psi_S^{\op})$ with
\begin{equation}
\label{eq:S-adjoints}
 \Phi_S=
 \begin{pmatrix}
 p&0&q\\
 0&p&r\\
 0&0&s
 \end{pmatrix},
 \qquad
 \Psi_S=
 \begin{pmatrix}
 s&0&q\\
 0&s&r\\
 0&0&p
 \end{pmatrix},
\end{equation}
where $p,q,r,s\in K$. Moreover,
\begin{equation}
\label{eq:S-closure}
 \Cl(\cS)=\Span_K\{S_1,S_2\}.
\end{equation}
\end{proposition}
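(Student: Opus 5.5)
The plan is a direct linear-algebra computation in two stages, mirroring the proof of \cref{prop:R-adjoint-closure}. The only difference is that neither $S_1$ nor $S_2$ is invertible, so we cannot solve for $\Psi_S$ from the first equation.

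\textbf{Step 1: the adjoint algebra.} Write $\Phi=(\phi_{ij})$ and $\Psi=(\psi_{ij})$ as general $3\times3$ matrices. Impose $\Psi^tS_1=S_1\Phi$ and $\Psi^tS_2=S_2\Phi$ entrywise.

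Multiplying by $S_1$ on the right picks out columns $3$ and $1$ of $\Psi^t$ (placed in columns $1$ and $3$). Multiplying by $S_1$ on the left picks out rows $3$ and $1$ of $\Phi$. So the first equation reads
\[
 \bigl(\Psi^t e_3 \,\big|\, 0 \,\big|\, \Psi^t e_1\bigr)=\begin{pmatrix} e_3^t\Phi\\ 0\\ e_1^t\Phi\end{pmatrix}.
\]
Similarly, the second equation reads
\[
 \bigl(0 \,\big|\, \Psi^t e_3 \,\big|\, \Psi^t e_2\bigr)=\begin{pmatrix}0\\ e_3^t\Phi\\ e_2^t\Phi\end{pmatrix}.
\]
Comparing the zero rows and columns forces:
\begin{itemize}[label=$\cdot$]
\item $\Psi^te_3$ has vanishing entries in the positions killed by both equations;
\item row $3$ of $\Phi$ is $(0,0,s)$;
\item $\phi_{12}=\phi_{21}=0$ and $\phi_{11}=\phi_{22}$;
\item $\Psi$ is linked to $\Phi$ by $\psi_{33}=\phi_{11}$, $\psi_{13}=\phi_{13}$, $\psi_{23}=\phi_{23}$, $\psi_{11}=\psi_{22}=\phi_{33}$.
\end{itemize}
Solving this finite linear system yields exactly the four-parameter family \eqref{eq:S-adjoints}. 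The converse inclusion is a direct substitution.

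\textbf{Step 2: the closure.} Take a symmetric $T=(t_{ij})$ and impose $\Psi_S^tT=T\Phi_S$ for all $p,q,r,s$. Since the equation is linear in the parameters, it suffices to use four generators:
\begin{itemize}[label=$\cdot$]
\item the $p$-generator $\Phi=\operatorname{diag}(1,1,0)$, $\Psi=\operatorname{diag}(0,0,1)$;
\item the $s$-generator $\Phi=\operatorname{diag}(0,0,1)$, $\Psi=\operatorname{diag}(1,1,0)$;
\item the $q$-generator $\Phi=\Psi=E_{13}$;
\item the $r$-generator $\Phi=\Psi=E_{23}$.
\end{itemize}

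For the $p$-generator, write $P=\operatorname{diag}(1,1,0)$, so the equation becomes $(I-P)T=TP$. Then:
\begin{itemize}[label=$\cdot$]
\item the upper-left $2\times2$ block of the left side is $0$ while that of the right side is $T_{11}$, so the upper-left block of $T$ vanishes;
\item the entry $(3,3)$ gives $t_{33}=0$;
\item the off-diagonal parts then match automatically.
\end{itemize}

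The $q$- and $r$-generators give $E_{31}T=TE_{13}$ and $E_{32}T=TE_{23}$. On a $T$ of the form above, the first yields $t_{13}$ on both sides. The remaining conditions are then automatic. Hence $T=t_{13}S_1+t_{23}S_2$, and the reverse inclusion $\Span_K\{S_1,S_2\}\subseteq\Cl(\cS)$ holds by definition.

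The main obstacle is Step 1. Without an invertible member we must solve the coupled system for $\Phi$ and $\Psi$ simultaneously. The bookkeeping of which entries of $\Psi^t$ correspond to which rows of $\Phi$ is where errors creep in, in particular the transposes that produce the swap of $p$ and $s$ between $\Phi_S$ and $\Psi_S$. I would organize it by writing both equations as the row/column identities displayed above and reading off each entry. As a cross-check, I would use the known hyperbolic decomposition $U=\Span\{f_1,f_2\}$, $W=\Span\{f_3\}$: the pair $(e_U,e_W^{\op})$ from \cref{lem:hyperbolic-closure} must appear as the $p=1$, $q=r=s=0$ member.
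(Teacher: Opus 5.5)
Your proposal is correct and follows the paper's route. You solve the two adjoint equations entrywise, then impose $\Psi_S^tT=T\Phi_S$ on the four generators, which forces $t_{11}=t_{12}=t_{22}=t_{33}=0$ and leaves $t_{13},t_{23}$ free. Your bullet list does not state $\psi_{12}=\psi_{21}=\psi_{31}=\psi_{32}=0$ explicitly, but these come directly from the zero rows of the right-hand sides in your displayed identities: for example, the $(2,3)$ entry of the first equation gives $\psi_{12}=0$, and the $(1,3)$ entry of the second gives $\psi_{21}=0$.
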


\begin{proof}
Solving
\[
 \Psi_S^tS_i=S_i\Phi_S
 \qquad(i=1,2)
\]
gives \eqref{eq:S-adjoints}. For a symmetric matrix $T=(t_{ij})$, the identities
\[
 \Psi_S^tT=T\Phi_S
\]
for all $p,q,r,s$ yield
\[
 t_{11}=t_{12}=t_{22}=t_{33}=0,
\]
with $t_{13}$ and $t_{23}$ arbitrary. Therefore
\[
 T=t_{13}S_1+t_{23}S_2,
\]
which proves \eqref{eq:S-closure}.
\end{proof}

The pencil can be written in the block form
\begin{equation}
\label{eq:Kronecker-block}
 aS_1+bS_2
 =
 \begin{pmatrix}
 0_{2\times2}&L(a,b)\\
 L(a,b)^t&0
 \end{pmatrix},
 \qquad
 L(a,b)=\begin{pmatrix}a\\b\end{pmatrix}.
\end{equation}
This is $M_1$ in the notation of \eqref{eq:general-kronecker-block}, of minimal index $1$ and dimension $3$. It is the smallest singular block of positive minimal index.

We can now form the counterexample. On $K^7=K^4\oplus K^3$, define
\begin{equation}
\label{eq:Q-definition}
 Q_1=R_1\oplus S_1,
 \qquad
 Q_2=R_2\oplus S_2,
 \qquad
 \cQ=\{Q_1,Q_2\}.
\end{equation}

\begin{proposition}
\label{prop:Q-singular-pencil}
The pair $\cQ$ is singular. Every form in its pencil is hyperbolic.
\end{proposition}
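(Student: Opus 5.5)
The plan is to handle singularity and hyperbolicity separately, using the block structure $Q_i=R_i\oplus S_i$ together with \cref{prop:R-pencil} and \cref{prop:S-basic}.

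\emph{Singularity.} For $a,b\in K$,
\[
 aQ_1+bQ_2=(aR_1+bR_2)\oplus(aS_1+bS_2),
\]
so its determinant is the product of the two block determinants. By the computation in \cref{prop:S-basic}, $\det(aS_1+bS_2)=0$. This is a polynomial identity in $a,b$: the matrix has a zero $2\times2$ upper-left block, so its rank is at most $2$. The identity therefore persists after any field extension $L/K$, with $a,b\in L$. Hence no member of $\Span_L\{(Q_1)_L,(Q_2)_L\}$ is nondegenerate, and the pair is singular in the sense of \cref{def:hyperbolic-systems}. An explicit alternative is the kernel vector $(0,0,0,0,b,-a,0)$ when $(a,b)\neq(0,0)$, which lies in the radical over any extension.

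\emph{Hyperbolicity of each member.} Fix $a,b\in K$. By \cref{prop:R-pencil}, the form $aR_1+bR_2$ is hyperbolic in First's sense, so there is a decomposition $K^4=U_R\oplus W_R$ on whose two summands it vanishes. Explicitly:
\begin{itemize}
\item if $a\neq0$, take $U_R=\Span\{e_2,e_1+e_3\}$ and let $W_R$ be a complementary totally isotropic plane, which exists by the standard hyperbolic-plane decomposition of a nondegenerate form containing a Lagrangian;
\item if $a=0$, take $U_R=\Span\{e_1,e_3\}$ and $W_R=\Span\{e_2,e_4\}$.
\end{itemize}
By \cref{prop:S-basic}, $aS_1+bS_2$ vanishes on $U_S=\Span\{f_1,f_2\}$ and $W_S=\Span\{f_3\}$. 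Setting $U=U_R\oplus U_S$ and $W=W_R\oplus W_S$ gives $K^7=U\oplus W$. Because the two summands are orthogonal for $aQ_1+bQ_2$, the form restricted to $U$ is the orthogonal sum of its restrictions to $U_R$ and to $U_S$, and both vanish. The same holds for $W$. Hence $aQ_1+bQ_2$ is hyperbolic.

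The only step needing care is the $a\neq0$ case. There we must check that a nondegenerate form with a Lagrangian $U_R$ really admits a complementary Lagrangian $W_R$. This is standard: pick a hyperbolic basis extending a basis of $U_R$. Note that, as in \cref{prop:R-pencil}, the decomposition is allowed to depend on $(a,b)$; we do not need a single decomposition for the whole pencil. Indeed none exists, since $\cQ$ will turn out not to be weakly hyperbolic.
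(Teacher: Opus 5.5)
Your proof is correct and follows the paper's route. Singularity comes from the $M_1$ summand being singular over every extension, and each pencil member is hyperbolic as the orthogonal sum of the hyperbolic pieces from \cref{prop:R-pencil} and \cref{prop:S-basic}; you spell out the decomposition $U=U_R\oplus U_S$, $W=W_R\oplus W_S$ that the paper leaves implicit. Your closing aside, that no single decomposition serves the whole pencil, is justified by non-weak-hyperbolicity only when $K$ is formally real, but the argument does not use it.
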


\begin{proof}
For every field extension $L/K$ and every $a,b\in L$, the block $aS_1+bS_2$ is singular. Hence
\[
 aQ_1+bQ_2=(aR_1+bR_2)\oplus(aS_1+bS_2)
\]
is singular. Thus $\cQ$ is a singular pair.

By \cref{prop:R-pencil}, the first summand is hyperbolic; by \cref{prop:S-basic}, the second is hyperbolic. Their orthogonal sum is hyperbolic.
\end{proof}

The closure of this orthogonal sum is smaller than the direct sum of the two closures. The following cross-adjoint supplies the additional constraints.

\begin{lemma}[A cross-adjoint]
\label{lem:cross-adjoint}
Let
\[
 U=
 \begin{pmatrix}
 0&0&0\\
 0&0&0\\
 0&0&0\\
 0&0&1
 \end{pmatrix}
 \in\Mat_{4\times3}(K),
 \qquad
 Y=
 \begin{pmatrix}
 0&1&0&0\\
 0&0&1&0\\
 0&0&0&0
 \end{pmatrix}
 \in\Mat_{3\times4}(K).
\]
Then
\[
 Y^tS_i=R_iU
 \qquad(i=1,2).
\]
Consequently, with
\[
 \Phi=
 \begin{pmatrix}0&U\\0&0\end{pmatrix},
 \qquad
 \Psi=
 \begin{pmatrix}0&0\\Y&0\end{pmatrix},
\]
we have $(\Phi,\Psi^{\op})\in A(\cQ)$.
\end{lemma}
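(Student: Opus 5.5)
The statement has two parts: a pair of matrix identities $Y^tS_i=R_iU$ for $i=1,2$, and membership $(\Phi,\Psi^{\op})\in A(\cQ)$. The plan is to compute both sides of each identity directly as $4\times3$ matrices. Then I will reduce the membership claim to these identities via block multiplication.

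For the identities, first compute $R_iU$. Since $U$ has a single nonzero entry, in position $(4,3)$, the product $R_iU$ has third column equal to the fourth column of $R_i$ and zeros elsewhere. For $R_1$ this column is $(0,1,0,0)^t$; for $R_2$ it is $(0,0,1,0)^t$.

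Next compute $Y^tS_i$. Here $Y^t$ is the $4\times3$ matrix with $(Y^t)_{21}=1$ and $(Y^t)_{32}=1$, and zeros elsewhere. So row $2$ of $Y^tS_i$ is row $1$ of $S_i$, row $3$ is row $2$ of $S_i$, and rows $1$ and $4$ vanish. For $S_1$, row $1$ is $(0,0,1)$ and row $2$ is zero, so $Y^tS_1$ has a single $1$ in position $(2,3)$. For $S_2$, row $1$ is zero and row $2$ is $(0,0,1)$, so $Y^tS_2$ has a single $1$ in position $(3,3)$. Both products therefore agree with the corresponding $R_iU$.

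For the membership, write $Q_i=\operatorname{diag}(R_i,S_i)$ in the decomposition $K^7=K^4\oplus K^3$. Then
\[
\Psi^t=\begin{pmatrix}0&Y^t\\0&0\end{pmatrix}.
\]
Block multiplication gives
\[
\Psi^tQ_i=\begin{pmatrix}0&Y^tS_i\\0&0\end{pmatrix},
\qquad
Q_i\Phi=\begin{pmatrix}0&R_iU\\0&0\end{pmatrix}.
\]
By the identities just verified, these agree for $i=1,2$. By the definition \eqref{eq:adjoint-algebra}, this means $(\Phi,\Psi^{\op})\in A(\cQ)$.

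No step is a real obstacle. The only point needing care is the convention that $\Psi$, not $\Psi^t$, carries $Y$ in its lower-left block, so that the transpose moves $Y^t$ to the upper-right block, where it must match $R_iU$.
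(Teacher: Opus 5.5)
Your proof is correct and follows the paper's argument. You verify $Y^tS_i=R_iU$ by direct multiplication: both sides have a single $1$, at position $(2,3)$ for $i=1$ and at $(3,3)$ for $i=2$. You then use block multiplication to show that $\Psi^tQ_i=Q_i\Phi$ reduces exactly to these identities, with $Y^t$ correctly placed in the upper-right block of $\Psi^t$.
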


\begin{proof}
Direct multiplication gives
\[
 Y^tS_1=R_1U=
 \begin{pmatrix}
 0&0&0\\
 0&0&1\\
 0&0&0\\
 0&0&0
 \end{pmatrix},
 \qquad
 Y^tS_2=R_2U=
 \begin{pmatrix}
 0&0&0\\
 0&0&0\\
 0&0&1\\
 0&0&0
 \end{pmatrix}.
\]
The block adjoint equation
\[
 \Psi^tQ_i=Q_i\Phi
\]
is exactly $Y^tS_i=R_iU$.
\end{proof}

\begin{theorem}[Closure calculation]
\label{thm:Q-closure}
The adjoint closure of $\cQ$ is exactly its pencil:
\[
 \Cl(\cQ)=\Span_K\{Q_1,Q_2\}.
\]
Consequently, every form in $\Cl(\cQ)$ is hyperbolic.
\end{theorem}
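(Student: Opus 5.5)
The plan is to write an arbitrary $T\in\Cl(\cQ)$ in block form with respect to $K^7=K^4\oplus K^3$,
\[
 T=\begin{pmatrix}A&B\\B^t&C\end{pmatrix},
 \qquad A=A^t\in\Mat_4(K),\ B\in\Mat_{4\times3}(K),\ C=C^t\in\Mat_3(K).
\]
I will then test $T$ against three kinds of elements of $A(\cQ)$: the adjoints of $\cR$ and of $\cS$ embedded diagonally, and the cross-adjoint of \cref{lem:cross-adjoint}. The reverse inclusion $\Span_K\{Q_1,Q_2\}\subseteq\Cl(\cQ)$ holds by definition. The final assertion, that every form in $\Cl(\cQ)$ is hyperbolic, is then exactly \cref{prop:Q-singular-pencil}.

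\textbf{Step 1: kill the off-diagonal block.} If $(\Phi_R,\Psi_R^{\op})\in A(\cR)$, then $(\Phi_R\oplus0,(\Psi_R\oplus0)^{\op})\in A(\cQ)$, because the adjoint equations are blockwise. Expanding $\Psi^tT=T\Phi$ for this element gives three conditions:
\[
 \Psi_R^tA=A\Phi_R,\qquad \Psi_R^tB=0,\qquad B^t\Phi_R=0.
\]
Taking $x=u=1$ and $y=z=v=w=0$ in \eqref{eq:R-adjoints} gives $\Phi_R=\Psi_R=I_4$. Hence $B=0$, and $A$ satisfies all adjoint relations of $\cR$, so $A\in\Cl(\cR)$. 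The same argument with $0\oplus\Phi_S$ from \eqref{eq:S-adjoints} gives $C\in\Cl(\cS)$. By \cref{prop:R-adjoint-closure,prop:S-adjoint-closure} we may therefore write
\[
 A=\alpha R_1+\beta R_2+\gamma R_3,\qquad C=aS_1+bS_2 .
\]

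\textbf{Step 2: apply the cross-adjoint.} Take $(\Phi,\Psi^{\op})$ from \cref{lem:cross-adjoint}. Since $B=0$, the identity $\Psi^tT=T\Phi$ reduces to the single block equation
\[
 Y^tC=AU.
\]
Only the third column of each side is nonzero. On the right this column is $Ae_4=(0,\alpha,\beta,\gamma)^t$. On the left, $Y^t$ sends the rows of $C$ to positions $2,3$, so the column is $(0,C_{13},C_{23},0)^t=(0,a,b,0)^t$. Comparing the two columns gives
\[
 \alpha=a,\qquad \beta=b,\qquad \gamma=0,
\]
and therefore $T=aQ_1+bQ_2$.

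The only step needing care is this final comparison. It is where the cross-adjoint removes $R_3$, and it also ties the coefficients of the two summands together. It is a short explicit matrix product, using the matrices already displayed in the proof of \cref{lem:cross-adjoint}.
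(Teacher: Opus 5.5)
Your proposal is correct and follows the paper's proof essentially step for step. Your choice $\Phi_R=\Psi_R=I_4$, embedded as $I_4\oplus0$, is exactly the block projection $P_R$ that the paper uses to force block-diagonality, and your third-column comparison reproduces the paper's difference matrix $Y^tT_S-T_RU$, giving $\alpha=a$, $\beta=b$, $\gamma=0$.
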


\begin{proof}
Let $P_R$ and $P_S$ be the projections of $K^4\oplus K^3$ onto the two direct summands. Since the $Q_i$ are block diagonal,
\[
 (P_R,P_R^{\op}),\ (P_S,P_S^{\op})\in A(\cQ).
\]
If $T\in\Cl(\cQ)$, the closure equations for these projections force $T$ to be block diagonal:
\[
 T=T_R\oplus T_S.
\]
Every adjoint of $\cR$ or $\cS$ extends by zero to an adjoint of $\cQ$. Hence
\[
 T_R\in\Cl(\cR),
 \qquad
 T_S\in\Cl(\cS).
\]
By \cref{prop:R-adjoint-closure,prop:S-adjoint-closure}, there are $a,b,c,d,e\in K$ such that
\[
 T_R=aR_1+bR_2+cR_3,
 \qquad
 T_S=dS_1+eS_2.
\]

Apply the closure equation to the cross-adjoint in \cref{lem:cross-adjoint}. It becomes
\[
 Y^tT_S=T_RU.
\]
Direct calculation gives
\[
 Y^tT_S-T_RU
 =
 \begin{pmatrix}
 0&0&0\\
 0&0&d-a\\
 0&0&e-b\\
 0&0&-c
 \end{pmatrix}.
\]
Therefore
\[
 d=a,
 \qquad
 e=b,
 \qquad
 c=0.
\]
It follows that
\[
 T=a(R_1\oplus S_1)+b(R_2\oplus S_2)
 =aQ_1+bQ_2.
\]
Thus $\Cl(\cQ)\subseteq\Span_K\{Q_1,Q_2\}$. The reverse inclusion holds for the closure of every system, proving the equality.

Every member of the pencil is hyperbolic by \cref{prop:Q-singular-pencil}.
\end{proof}

\begin{proposition}[Failure of weak hyperbolicity]
\label{prop:conceptual-obstruction}
If $K$ is formally real, then $\cQ$ is not weakly hyperbolic.
\end{proposition}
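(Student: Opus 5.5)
The plan is to reduce to the regular summand by \cref{lem:remove-hyperbolic}, and then to apply \cref{cor:R-not-weak}. The title ``conceptual obstruction'' suggests exactly this route, rather than the trace-form computation postponed to \cref{sec:adjoint-obstructions}.

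\textbf{Step 1.} By \eqref{eq:Q-definition}, $\cQ=\cR\perp\cS$, where both systems are indexed by $\{1,2\}$.

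\textbf{Step 2.} By \cref{prop:S-basic}, the system $\cS$ is hyperbolic with respect to $K^3=\Span_K\{f_1,f_2\}\oplus\Span_K\{f_3\}$. So \cref{lem:remove-hyperbolic} applies with this $\cS$. It gives that $\cQ$ is weakly hyperbolic if and only if $\cR$ is weakly hyperbolic.

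\textbf{Step 3.} By \cref{cor:R-not-weak}, $\cR$ is not weakly hyperbolic when $K$ is formally real. That corollary uses the form $R_3\in\Cl(\cR)$, which has signature $1$ at every ordering, together with \cref{cor:closure-obstruction}. Hence $\cQ$ is not weakly hyperbolic.

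The main weight of the argument lies in \cref{lem:remove-hyperbolic}. That lemma relies on First's Hermitian-category equivalence and on the cancellation result \cite[Proposition~5.12]{BFFM2014}. Since it is already established, the proof itself is just the chain above.

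The argument cannot go through $\Cl(\cQ)$ directly. \Cref{thm:Q-closure} shows that $\Cl(\cQ)$ contains only hyperbolic forms, so \cref{cor:closure-obstruction} gives nothing for $\cQ$. This is exactly the point of the counterexample: the obstruction must be imported from $\cR$ through the cancellation of the hyperbolic summand $\cS$.

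As an independent check, one could use \cref{thm:first-A} and show that $q_{A(\cQ),\sigma}$ has nonzero signature. That requires computing the $18$-dimensional adjoint algebra, so I would keep it as the secondary route in \cref{sec:adjoint-obstructions}.
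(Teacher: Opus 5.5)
Your proposal is correct and is the paper's proof: the paper also cancels the hyperbolic summand $\cS$ via \cref{lem:remove-hyperbolic} and then invokes \cref{cor:R-not-weak}. Your suggested check through \cref{thm:first-A} is likewise carried out in the paper as a second proof, in \cref{cor:trace-obstruction}.
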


\begin{proof}
The singular system $\cS$ is hyperbolic. By \cref{lem:remove-hyperbolic},
\[
 \cR\perp\cS\text{ is weakly hyperbolic}
 \quad\Longleftrightarrow\quad
 \cR\text{ is weakly hyperbolic}.
\]
The latter condition fails by \cref{cor:R-not-weak}.
\end{proof}

This completes the proof of \cref{thm:main-intro}.

\section{Closure formulas and the dimension bound}
\label{sec:visibility}

Let
\[
 \cR=\{R_1,R_2\}\text{ on }V_R,
 \qquad
 \cS=\{S_1,S_2\}\text{ on }V_S,
\]
and set
\[
 \cQ=\cR\perp\cS
 =
 \{R_1\oplus S_1,R_2\oplus S_2\}.
\]
Define the cross-adjoint spaces
\begin{align}
 \cH_{RS}
 &=
 \left\{
 (U,Y):
 U\in\Hom(V_S,V_R),\
 Y\in\Hom(V_R,V_S),\
 Y^tS_i=R_iU\ (i=1,2)
 \right\},
 \label{eq:HRS}\\
 \cH_{SR}
 &=
 \left\{
 (W,X):
 W\in\Hom(V_R,V_S),\
 X\in\Hom(V_S,V_R),\
 X^tR_i=S_iW\ (i=1,2)
 \right\}.
 \label{eq:HSR}
\end{align}

\begin{proposition}[Closure of an orthogonal sum]
\label{prop:cross-closure}
A symmetric form $T$ belongs to $\Cl(\cQ)$ if and only if it has the form
\[
 T=T_R\oplus T_S,
 \qquad
 T_R\in\Cl(\cR),
 \quad
 T_S\in\Cl(\cS),
\]
and satisfies
\begin{align}
 Y^tT_S&=T_RU
 &&\text{for every }(U,Y)\in\cH_{RS},
 \label{eq:cross-closure-one}\\
 X^tT_R&=T_SW
 &&\text{for every }(W,X)\in\cH_{SR}.
 \label{eq:cross-closure-two}
\end{align}
\end{proposition}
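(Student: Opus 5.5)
The plan is to decompose a general adjoint of $\cQ$ into blocks with respect to $V=V_R\oplus V_S$ and read off the closure equation blockwise. Write
\[
 \Phi=\begin{pmatrix}\Phi_{RR}&\Phi_{RS}\\ \Phi_{SR}&\Phi_{SS}\end{pmatrix},
 \qquad
 \Psi=\begin{pmatrix}\Psi_{RR}&\Psi_{RS}\\ \Psi_{SR}&\Psi_{SS}\end{pmatrix}.
\]
Since each $Q_i=R_i\oplus S_i$ is block diagonal, the adjoint equation $\Psi^tQ_i=Q_i\Phi$ splits into four independent block equations:
\begin{itemize}
\item $\Psi_{RR}^tR_i=R_i\Phi_{RR}$ and $\Psi_{SS}^tS_i=S_i\Phi_{SS}$;
\item $\Psi_{SR}^tS_i=R_i\Phi_{RS}$, which says $(\Phi_{RS},\Psi_{SR})\in\cH_{RS}$;
\item $\Psi_{RS}^tR_i=S_i\Phi_{SR}$, which says $(\Phi_{SR},\Psi_{RS})\in\cH_{SR}$.
\end{itemize}
Hence $A(\cQ)$ is exactly the direct sum of four pieces: $A(\cR)$ embedded in the $RR$ corner, $A(\cS)$ in the $SS$ corner, and the elements
\[
 \left(\begin{pmatrix}0&U\\0&0\end{pmatrix},\begin{pmatrix}0&0\\Y&0\end{pmatrix}^{\op}\right),
 \qquad
 \left(\begin{pmatrix}0&0\\W&0\end{pmatrix},\begin{pmatrix}0&X\\0&0\end{pmatrix}^{\op}\right)
\]
for $(U,Y)\in\cH_{RS}$ and $(W,X)\in\cH_{SR}$. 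This is the same mechanism as in \cref{lem:cross-adjoint}. Because the closure condition is linear in $(\phi,\psi)$, $T\in\Cl(\cQ)$ if and only if $T$ satisfies it for elements of each of the four types.

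Next, apply this to the projections, as in the proof of \cref{thm:Q-closure}. The projections $(P_R,P_R^{\op})$ and $(P_S,P_S^{\op})$ lie in $A(\cQ)$, since $(I,I^{\op})$ lies in $A(\cR)$ and in $A(\cS)$. Write $T$ in blocks $T_{RR},T_{RS},T_{SR},T_{SS}$. The equation $P_R^tT=TP_R$ forces $T_{RS}=0$ and $T_{SR}=0$, so $T=T_R\oplus T_S$.

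For a block diagonal $T$, the condition from an element $(\phi,\psi)$ of $A(\cR)$ placed in the $RR$ corner reads $\psi^tT_R=T_R\phi$. Over all such elements this is precisely $T_R\in\Cl(\cR)$; similarly $T_S\in\Cl(\cS)$. For the cross elements, compute $\Psi^tT$ and $T\Phi$ blockwise with $T=T_R\oplus T_S$:
\begin{itemize}
\item for $(U,Y)\in\cH_{RS}$, the only possibly nonzero block is the $RS$ block, and the condition becomes $Y^tT_S=T_RU$, which is \eqref{eq:cross-closure-one};
\item for $(W,X)\in\cH_{SR}$, the only possibly nonzero block is the $SR$ block, and the condition becomes $X^tT_R=T_SW$, which is \eqref{eq:cross-closure-two}.
\end{itemize}

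Both directions then follow. If $T\in\Cl(\cQ)$, the conditions derived above are necessary. Conversely, if $T$ has the stated form, then $T$ satisfies the closure equation against each of the four types of element. By linearity and the decomposition of $A(\cQ)$, it therefore satisfies the equation against all of $A(\cQ)$.

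The only delicate point is making sure the decomposition of $A(\cQ)$ is exact, so that there are no adjoints mixing the diagonal and off-diagonal conditions. This holds because the adjoint equations are linear and the $Q_i$ are block diagonal, so each block of $\Psi^tQ_i-Q_i\Phi$ involves only one pair of blocks of $(\Phi,\Psi)$. Keeping the transposes and the off-diagonal block positions straight is routine bookkeeping.
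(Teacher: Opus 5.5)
Your proof is correct and follows essentially the same route as the paper: the block projections force $T$ to be block diagonal, the diagonal blocks of the closure equation give $T_R\in\Cl(\cR)$ and $T_S\in\Cl(\cS)$, and the off-diagonal blocks give exactly the two cross-closure conditions. Splitting $A(\cQ)$ into four independent pieces and appealing to linearity only makes explicit what the paper's block computation for an arbitrary adjoint already uses.
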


\begin{proof}
The block projections belong to $A(\cQ)$, so every form in the closure is block diagonal. Every adjoint of either diagonal summand extends by zero to an adjoint of $\cQ$, and hence the two diagonal blocks of a member of $\Cl(\cQ)$ lie in $\Cl(\cR)$ and $\Cl(\cS)$.

Write an arbitrary adjoint of $\cQ$ in block form as
\[
 \Phi=\begin{pmatrix}A&U\\W&C\end{pmatrix},
 \qquad
 \Psi=\begin{pmatrix}B&X\\Y&D\end{pmatrix},
\]
where $(A,B^{\op})\in A(\cR)$, $(C,D^{\op})\in A(\cS)$, $(U,Y)\in\cH_{RS}$, and $(W,X)\in\cH_{SR}$. For $T=T_R\oplus T_S$, the off-diagonal blocks of
\[
 \Psi^tT=T\Phi
\]
are precisely \eqref{eq:cross-closure-one} and \eqref{eq:cross-closure-two}; the diagonal blocks are the closure equations for $T_R$ and $T_S$.
\end{proof}

\begin{definition}
Assume that $\cR$ is nonsingular. The \emph{visible regular closure} of $\cR$ in $\cQ=\cR\perp\cS$ is
\[
 \Vis_{\cR}(\cQ)
 =
 \left\{
 T_R\in\Cl(\cR):
 \text{there is }T_S\in\Cl(\cS)
 \text{ with }T_R\oplus T_S\in\Cl(\cQ)
 \right\}.
\]
Equivalently, it is the image of the restriction map
\[
 \rho_R:\Cl(\cQ)\longrightarrow\Cl(\cR),
 \qquad T_R\oplus T_S\longmapsto T_R.
\]
\end{definition}

We first determine the closure of an individual singular block.

\begin{lemma}
\label{lem:closure-kronecker-block}
For every $\varepsilon\geq1$,
\[
 \Cl(M_\varepsilon)
 =
 \Span_K\{S_{\varepsilon,1},S_{\varepsilon,2}\}.
\]
For $\varepsilon=0$, one has $\Cl(M_0)=0$.
\end{lemma}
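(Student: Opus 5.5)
The case $\varepsilon=0$ is immediate. $M_0$ is the zero pair on $K$, so $A(M_0)=\End(K)\times\End(K)^{\op}$ contains $(1,0^{\op})$. The closure equation for this element reads $0\cdot q=q\cdot 1$, which forces $q=0$.

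For $\varepsilon\geq1$ the plan is to exhibit enough explicit adjoints to cut $\Cl(M_\varepsilon)$ down to the pencil, imitating \cref{prop:S-adjoint-closure}. Write a symmetric form on $E_\varepsilon\oplus F_\varepsilon$ in block form
\[
 T=\begin{pmatrix}A&B\\B^t&D\end{pmatrix},
 \qquad A=A^t\in\Mat_{\varepsilon+1}(K),\ D=D^t\in\Mat_\varepsilon(K),\ B\in\Mat_{(\varepsilon+1)\times\varepsilon}(K).
\]

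\textbf{Step 1 (diagonal blocks vanish).} The pair $(\Phi,\Psi^{\op})$ with $\Phi=e_E$ and $\Psi=e_F$, the projections onto $E_\varepsilon$ and $F_\varepsilon$, is an adjoint. This follows exactly as in \cref{lem:hyperbolic-closure}, because $M_\varepsilon$ is hyperbolic for this decomposition. The closure equation $e_F^tT=Te_E$ then gives $A=0$, $D=0$, and leaves $B$ free. This is precisely \cref{lem:hyperbolic-closure} applied to $E_\varepsilon\oplus F_\varepsilon$.

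\textbf{Step 2 (constraining $B$).} We look for adjoints of the form
\[
 \Phi=\begin{pmatrix}P&0\\0&Q\end{pmatrix},\qquad \Psi=\begin{pmatrix}P'&0\\0&Q'\end{pmatrix}.
\]
For these the adjoint equations become $P'^tL_{\varepsilon,k}=L_{\varepsilon,k}Q$ and $Q'^tL_{\varepsilon,k}^t=L_{\varepsilon,k}^tP$ for $k=0,1$, and the closure condition becomes $P'^tB=BQ$ and $Q'^tB^t=B^tP$. The natural source is the shift structure. Let $N_E$ be the nilpotent shift $e_j\mapsto e_{j+1}$ on $E_\varepsilon$ and $N_F$ the shift on $F_\varepsilon$. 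Then $N_EL_{\varepsilon,0}=L_{\varepsilon,0}N_F=L_{\varepsilon,1}$ on the first $\varepsilon-1$ basis vectors, up to boundary terms. More generally, we will take $Q$ to be an upper- or lower-triangular Toeplitz matrix in the powers of $N_F$, find the compatible $P'$, and similarly in the other direction. Concretely, the $\varepsilon=1$ solution \eqref{eq:S-adjoints} should generalize to the family $\Phi=\operatorname{diag}(p I_E, sI_F)$, $\Psi=\operatorname{diag}(sI_E,pI_F)$ (already used in Step 1), together with the "Toeplitz" adjoints coming from polynomials in the shifts.

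Imposing $P'^tB=BQ$ for these adjoints should force $B$ to commute, in the appropriate intertwining sense, with the shifts. In other words $B$ must satisfy $BN_F=N_E^tB$-type relations whose solutions are exactly the span of $L_{\varepsilon,0}$ and $L_{\varepsilon,1}$. The supporting idea is that $B$ defines a map $F_\varepsilon\to E_\varepsilon$, and the intertwining equations say that $B$ is a morphism of Kronecker modules $L_\varepsilon$-to-$L_\varepsilon$-dual. The paper's formula predicts this Hom space is $2$-dimensional, spanned by the two structure maps.

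\textbf{Step 3 (converse).} The reverse inclusion $\Span_K\{S_{\varepsilon,1},S_{\varepsilon,2}\}\subseteq\Cl(M_\varepsilon)$ holds for every system.

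The main obstacle is Step 2: producing an explicit, sufficiently large family of adjoints for general $\varepsilon$, and showing that the resulting linear equations on $B$ leave only a $2$-dimensional solution space. The first thing to try is to solve the adjoint equations completely in the $P,Q$ block-diagonal case. This amounts to polynomial identities, because $L_{\varepsilon,0},L_{\varepsilon,1}$ correspond to multiplication by $1$ and $t$ from polynomials of degree $<\varepsilon$ to polynomials of degree $\le\varepsilon$. In that model the adjoints include multiplication-type maps by affine forms in $s,t$ after homogenization, and $B$ corresponds to a bilinear pairing. We would then verify the dimension count by induction on $\varepsilon$, checking the boundary rows $e_0$ and $e_\varepsilon$ separately.
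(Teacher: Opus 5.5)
\textbf{Gap in Step 2.} Your treatment of $\varepsilon=0$ and your Step 1 agree with the paper. For $\varepsilon=1$ they already finish the proof, because $B$ is then a $2\times1$ matrix. For $\varepsilon\ge2$, however, Step 2 cannot work: the block-diagonal ansatz imposes no condition on $B$ at all.

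The equations $P'^tL_{\varepsilon,k}=L_{\varepsilon,k}Q$ for $k=0,1$ say that $(Q,P'^t)$ is an endomorphism of the Kronecker module $L_{\varepsilon,0},L_{\varepsilon,1}\colon F_\varepsilon\rightrightarrows E_\varepsilon$, and this module has only scalar endomorphisms. Concretely, computing $P'^te_{j+1}$ via $e_{j+1}=L_{\varepsilon,0}f_{j+1}=L_{\varepsilon,1}f_j$ gives $L_{\varepsilon,0}Qf_{j+1}=L_{\varepsilon,1}Qf_j$ for $0\le j\le\varepsilon-2$. This forces $Q$ to be Toeplitz with zero first row and zero last row off the diagonal, hence scalar. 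The ``boundary terms'' you mention are exactly what kills the truncated-shift candidates. So $Q=\lambda I_F$ and $P'=\lambda I_E$, and transposing the other pair of equations gives $Q'=\mu I_F$ and $P=\mu I_E$.

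The block-diagonal adjoints are therefore only the two-parameter family you already used in Step 1. On $B$ the closure conditions $P'^tB=BQ$ and $Q'^tB^t=B^tP$ become $\lambda B=\lambda B$ and $\mu B^t=\mu B^t$. The solution space remains all of $\Mat_{(\varepsilon+1)\times\varepsilon}(K)$, of dimension $\varepsilon(\varepsilon+1)>2$. The heuristic ``the Hom space is $2$-dimensional'' and the proposed induction do not replace an actual family of adjoints that cuts $B$ down.

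\textbf{Missing idea: nilpotent off-diagonal adjoints.} For $H\in\Mat_{(\varepsilon+1)\times\varepsilon}(K)$, the pair $\Phi=\Psi=\begin{pmatrix}0&H\\0&0\end{pmatrix}$ lies in $A(M_\varepsilon)$ exactly when $H^tL_{\varepsilon,0}$ and $H^tL_{\varepsilon,1}$ are symmetric. For $T=\begin{pmatrix}0&B\\B^t&0\end{pmatrix}$ its closure equation is $H^tB=B^tH$. The Hankel matrices $H_r$, with ones on $i+j=r$ for $0\le r\le2\varepsilon-1$, satisfy the symmetry condition. This is the paper's choice, and it then finishes by an index chase on these equations.

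Your polynomial model gives an alternative finish. Identify $f_j=x^{\varepsilon-1-j}y^j$ and $e_i=x^{\varepsilon-i}y^i$, so that $L_{\varepsilon,0}$ and $L_{\varepsilon,1}$ become multiplication by $x$ and $y$. The conditions $H_r^tB=B^tH_r$ for all $r$ then say exactly that $g\cdot B(g')=g'\cdot B(g)$ for all forms $g,g'$ of degree $\varepsilon-1$. Take $g=x^{\varepsilon-1}$ and $g'=y^{\varepsilon-1}$. Unique factorization gives $B(x^{\varepsilon-1})=\ell\,x^{\varepsilon-1}$ with $\ell=ax+by$. Cancelling $x^{\varepsilon-1}$ in $x^{\varepsilon-1}B(g)=g\,\ell\,x^{\varepsilon-1}$ then gives $B(g)=\ell g$ for every $g$, that is, $B=aL_{\varepsilon,0}+bL_{\varepsilon,1}$.
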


\begin{proof}
Fix $\varepsilon\geq1$ and write $E=E_\varepsilon$, $F=F_\varepsilon$. Since $M_\varepsilon$ is hyperbolic with respect to $E\oplus F$, \cref{lem:hyperbolic-closure} implies that every $T\in\Cl(M_\varepsilon)$ has the form
\[
 T=
 \begin{pmatrix}
 0&C\\
 C^t&0
 \end{pmatrix},
 \qquad
 C\in\Mat_{(\varepsilon+1)\times\varepsilon}(K).
\]
Index the rows of $C$ by $0,\ldots,\varepsilon$ and its columns by $0,\ldots,\varepsilon-1$.

For $0\leq r\leq2\varepsilon-1$, let $H_r\in\Mat_{(\varepsilon+1)\times\varepsilon}(K)$ be the Hankel matrix
\[
 (H_r)_{ij}
 =
 \begin{cases}
 1,&i+j=r,\\
 0,&i+j\neq r.
 \end{cases}
\]
Put
\[
 \Phi_r=\Psi_r=
 \begin{pmatrix}
 0&H_r\\
 0&0
 \end{pmatrix}.
\]
Because $H_r^tL_{\varepsilon,0}$ and $H_r^tL_{\varepsilon,1}$ are symmetric,
\[
 (\Phi_r,\Psi_r^{\op})\in A(M_\varepsilon).
\]
The closure equation therefore gives
\begin{equation}
\label{eq:hankel-closure}
 H_r^tC=C^tH_r
 \qquad(0\leq r\leq2\varepsilon-1).
\end{equation}

Write $C=(c_{ij})$ and interpret $c_{ij}$ as zero when the row index is outside $\{0,\ldots,\varepsilon\}$. If $\varepsilon=1$, then $C$ is a $2\times1$ matrix and is already of the form $aL_{1,0}+bL_{1,1}$. We may therefore assume $\varepsilon\geq2$. For $0\leq j<\ell\leq\varepsilon-1$, comparison of the $(j,\ell)$-entry in \eqref{eq:hankel-closure} gives
\begin{equation}
\label{eq:hankel-index}
 c_{r-j,\ell}=c_{r-\ell,j}.
\end{equation}
Taking $j=0$ and $r=i<\ell$ shows that $c_{i\ell}=0$ whenever $i<\ell$. If $i>j+1$, then $j\leq\varepsilon-2$. Choose $\ell=\varepsilon-1$ and $r=i+\ell$, so that $j<\ell$ and $r\leq2\varepsilon-1$; the left side of \eqref{eq:hankel-index} has row index
\[
 i+\ell-j> \varepsilon,
\]
and hence $c_{ij}=0$. Thus
\[
 c_{ij}=0
 \qquad\text{unless}\qquad
 i=j\text{ or }i=j+1.
\]
Taking $r=j+\ell$ in \eqref{eq:hankel-index} shows that the diagonal entries $c_{jj}$ are equal. Taking $r=j+\ell+1$ shows that the subdiagonal entries $c_{j+1,j}$ are equal. Hence
\[
 C=aL_{\varepsilon,0}+bL_{\varepsilon,1}
\]
for some $a,b\in K$, and therefore
\[
 T=aS_{\varepsilon,1}+bS_{\varepsilon,2}.
\]
The reverse inclusion holds for every system.

For $\varepsilon=0$, $M_0$ is the zero pair on a one-dimensional space. Its adjoint algebra contains $(1,0^{\op})$, whose closure equation forces the form to be zero.
\end{proof}

The cross-adjoints with a regular summand determine its restriction in the total closure.

\begin{theorem}[Closure collapse]
\label{thm:closure-collapse}
Let $K$ be a field of characteristic different from $2$, and let
\[
 \cR=\{R_1,R_2\}
\]
be a pair with $R_1$ nondegenerate. For $\varepsilon\geq1$, set
\[
 \cQ=\cR\perp M_\varepsilon.
\]
Then
\[
 \Cl(\cQ)
 =
 \Span_K
 \{R_1\perp S_{\varepsilon,1},
   R_2\perp S_{\varepsilon,2}\}.
\]
Consequently,
\[
 \Vis_{\cR}(\cQ)
 =
 \Span_K\{R_1,R_2\}.
\]
\end{theorem}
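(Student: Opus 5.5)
The plan is to combine \cref{prop:cross-closure} with \cref{lem:closure-kronecker-block}, and then to use one explicit family of cross-adjoints in $\cH_{RS}$ built from $J=R_1^{-1}R_2$. Let $T\in\Cl(\cQ)$. By \cref{prop:cross-closure}, $T=T_R\oplus T_S$ with $T_R\in\Cl(\cR)$ and $T_S\in\Cl(M_\varepsilon)$. By \cref{lem:closure-kronecker-block}, $T_S=aS_{\varepsilon,1}+bS_{\varepsilon,2}$ for some $a,b\in K$. It then suffices to show $T_R=aR_1+bR_2$. The reverse inclusion $\Span_K\{R_1\perp S_{\varepsilon,1},R_2\perp S_{\varepsilon,2}\}\subseteq\Cl(\cQ)$ holds for every system. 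The statement about $\Vis_{\cR}(\cQ)$ then follows by restricting to the regular summand.

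The cross-adjoints generalize \cref{lem:cross-adjoint}. Fix an arbitrary vector $v\in V_R$ and set $u_j=J^jv$ for $0\le j\le\varepsilon-1$. Define $U\colon E_\varepsilon\oplus F_\varepsilon\to V_R$ by $U|_{E_\varepsilon}=0$ and $U(f_j)=u_j$. Define $Y\colon V_R\to E_\varepsilon\oplus F_\varepsilon$ with image in $E_\varepsilon$, determined by the condition that $Y^t$ sends the dual vector $e_k^*$ to $y_k=R_1J^kv$ for $0\le k\le\varepsilon$.

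Because $Y$ lands in $E_\varepsilon$ and $U$ vanishes on $E_\varepsilon$, both sides of $Y^tS_{\varepsilon,i}=R_iU$ vanish on $E_\varepsilon$. On $f_j$, the two conditions read
\[
 y_j=R_1u_j,\qquad y_{j+1}=R_2u_j .
\]
The first holds by definition. The second is $R_1J^{j+1}v=R_2J^jv$, which is exactly $R_1J=R_2$. Hence $(U,Y)\in\cH_{RS}$ for every $v$. This is the chain construction: $u_{j+1}=Ju_j$ is the recurrence forced by the shift structure of $L_{\varepsilon,0}$ and $L_{\varepsilon,1}$.

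Next I would apply \eqref{eq:cross-closure-one}, namely $Y^tT_S=T_RU$, and evaluate both sides on $f_0$. By the same support reasoning, the left side gives
\[
 a\,y_0+b\,y_1=aR_1v+bR_1Jv=(aR_1+bR_2)v .
\]
The right side gives $T_Ru_0=T_Rv$. Since $v$ is arbitrary, $T_R=aR_1+bR_2$, and therefore $T=a(R_1\perp S_{\varepsilon,1})+b(R_2\perp S_{\varepsilon,2})$. This argument uses neither the structure of $\Cl(\cR)$ nor the equations \eqref{eq:cross-closure-two}.

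The only real obstacle is bookkeeping. One must match the identification of $Y^t$ with a map $E_\varepsilon^*\oplus F_\varepsilon^*\to V_R^*$ to the block shape of $S_{\varepsilon,i}$, so that the components pair correctly: $L_{\varepsilon,0}$ pairs $f_j$ with $e_j$, and $L_{\varepsilon,1}$ pairs $f_j$ with $e_{j+1}$. I would sanity-check the construction against \cref{lem:cross-adjoint}. There, $\varepsilon=1$ and $v=e_4$, so $Jv=0$, giving $y_0=R_1e_4$ and $y_1=0$; the $Y$ and $U$ displayed there agree with this up to the ordering of the basis of $K^3$.
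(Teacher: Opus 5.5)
Your proof is correct and follows essentially the same route as the paper. You use the same chain cross-adjoints $U_v$, $Y_v$ built from $J=R_1^{-1}R_2$, insert them into \eqref{eq:cross-closure-one}, and read off $T_Rv=(aR_1+bR_2)v$ at $f_0$. The only slip is in your side sanity check: $Je_4=e_3$, not $0$, so $y_1=R_1e_3=e_3=R_2e_4$. With this correction your construction reproduces the $U$ and $Y$ of \cref{lem:cross-adjoint} exactly, with no reordering of the basis of $K^3$.
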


\begin{proof}
Set
\[
 J=R_1^{-1}R_2.
\]
Let
\[
 T=T_R\oplus T_S\in\Cl(\cQ).
\]
By \cref{prop:cross-closure,lem:closure-kronecker-block},
\[
 T_R\in\Cl(\cR),
 \qquad
 T_S=aS_{\varepsilon,1}+bS_{\varepsilon,2}
\]
for some $a,b\in K$.

For $v\in V_R$, define
\[
 U_v=
 \left(
 0_{\dim_K(V_R)\times(\varepsilon+1)}
 \ \middle|\
 v,\ Jv,\ldots,J^{\varepsilon-1}v
 \right)
 \in\Hom(E_\varepsilon\oplus F_\varepsilon,V_R)
\]
and define $Y_v\in\Hom(V_R,E_\varepsilon\oplus F_\varepsilon)$ by
\[
 Y_v^t=
 \left(
 R_1v,\ R_1Jv,\ldots,R_1J^\varepsilon v
 \ \middle|\
 0_{\dim_K(V_R)\times\varepsilon}
 \right).
\]
Using \eqref{eq:general-kronecker-block},
\[
 Y_v^tS_{\varepsilon,1}=R_1U_v,
 \qquad
 Y_v^tS_{\varepsilon,2}=R_2U_v.
\]
Hence $(U_v,Y_v)\in\cH_{RS}$. Applying
\eqref{eq:cross-closure-one} gives
\[
 (T_R-aR_1-bR_2)U_v=0.
\]
The first column of the $F_\varepsilon$-block of $U_v$ is $v$, so
\[
 (T_R-aR_1-bR_2)v=0.
\]
Since $v$ is arbitrary,
\[
 T_R=aR_1+bR_2.
\]
Therefore
\[
 T
 =
 a(R_1\perp S_{\varepsilon,1})
 +
 b(R_2\perp S_{\varepsilon,2}).
\]
This proves one inclusion; the reverse is the general inclusion of the pencil in its adjoint closure.
\end{proof}

Minimal index zero behaves differently.

\begin{lemma}
\label{lem:zero-block}
Let $Z_m$ denote the zero pair on $K^m$, and let $\cR$ be a pair whose pencil contains a nondegenerate form. Then
\[
 \Cl(\cR\perp Z_m)
 =
 \{T_R\perp0:T_R\in\Cl(\cR)\}.
\]
In particular,
\[
 \Vis_{\cR}(\cR\perp Z_m)=\Cl(\cR).
\]
\end{lemma}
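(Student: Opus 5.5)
We apply \cref{prop:cross-closure} with $\cS=Z_m$ on $V_S=K^m$. Since both members of $Z_m$ are zero, every pair of endomorphisms of $K^m$ is an adjoint, so $\Cl(Z_m)=0$; indeed, the element $(I,0^{\op})$ already forces $T_S=0$, as in the case $\varepsilon=0$ of \cref{lem:closure-kronecker-block}. Hence every $T\in\Cl(\cR\perp Z_m)$ has the form $T_R\oplus0$ with $T_R\in\Cl(\cR)$. This gives the inclusion $\subseteq$. It remains to show that each $T_R\oplus0$ with $T_R\in\Cl(\cR)$ satisfies the cross-conditions \eqref{eq:cross-closure-one} and \eqref{eq:cross-closure-two} with $T_S=0$, namely
\[
T_RU=0\quad\text{for }(U,Y)\in\cH_{RS},\qquad X^tT_R=0\quad\text{for }(W,X)\in\cH_{SR}.
\]

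The key step is to determine the cross-adjoint spaces. For $(U,Y)\in\cH_{RS}$ we have $R_iU=Y^t\cdot0=0$ for $i=1,2$. Every form in the pencil therefore annihilates the image of $U$. For $(W,X)\in\cH_{SR}$ we have $X^tR_i=0$, and transposing gives $R_iX=0$. I would then use the hypothesis that the pencil contains a nondegenerate form $aR_1+bR_2$: since $(aR_1+bR_2)U=0$, it follows that $U=0$, and likewise $X=0$. Both conditions therefore hold trivially, so $T_R\oplus0\in\Cl(\cR\perp Z_m)$ by \cref{prop:cross-closure}.

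The pairs $(0,Y)$ and $(W,0)$ with $Y,W$ arbitrary are genuine adjoints, but they impose nothing: the condition from $(0,Y)$ reads $Y^tT_S=0$, which holds because $T_S=0$, and the condition from $(W,0)$ reads $T_SW=0$, which holds for the same reason. The statement about $\Vis_{\cR}$ then follows immediately from the definition of the restriction map. I expect no real obstacle. The only point requiring care is that nonsingularity must be used in the form of a nondegenerate member of the pencil over $K$ itself, which is exactly what the hypothesis provides, rather than in the weaker form of a nondegenerate member over an extension.
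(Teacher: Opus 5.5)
Your proof is correct and follows the paper's argument. The adjoint $(I,0^{\op})$ of the zero pair kills the $K^m$-component, and the cross-adjoint equations $R_iU=0$, $R_iX=0$ force $U=X=0$, so the cross-closure conditions of \cref{prop:cross-closure} hold trivially. The only difference is cosmetic: you apply the nondegenerate combination $aR_1+bR_2$ directly to $U$ and $X$, whereas the paper first recombines the pair so that $R_1$ itself is nondegenerate.
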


\begin{proof}
After an invertible linear recombination of the pair, assume that $R_1$ is nondegenerate. In the notation of \cref{prop:cross-closure}, the cross-adjoint equations with $S_1=S_2=0$ give
\[
 R_1U=0,\qquad X^tR_1=0,
\]
so $U=X=0$, while $Y$ and $W$ are arbitrary. The block projections force every closure form to be $T_R\oplus T_0$, with $T_R\in\Cl(\cR)$. The adjoint $(I_{K^m},0^{\op})$ of the zero pair, extended by zero on $V_R$, then forces $T_0=0$. Conversely, for every $T_R\in\Cl(\cR)$, both cross-closure equations are satisfied by $T_R\oplus0$, because $U=X=0$. This proves the equality.
\end{proof}

\begin{theorem}[Visibility dichotomy]
\label{thm:visibility-dichotomy}
Let $K$ be an infinite field of characteristic different from $2$, and let
\[
 \cQ
 \cong
 \cR\perp
 M_{\varepsilon_1}\perp\cdots\perp M_{\varepsilon_s}
\]
be a Kronecker decomposition with $s\geq1$ and $\cR$ nonsingular. Then
\[
 \Vis_{\cR}(\cQ)
 =
 \begin{cases}
 \Cl(\cR),
 &
 \varepsilon_1=\cdots=\varepsilon_s=0,\\[2mm]
 \Span_K\{R_1,R_2\},
 &
 \max_i\varepsilon_i\geq1.
 \end{cases}
\]
\end{theorem}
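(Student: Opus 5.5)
We first normalize, as described at the end of \cref{sec:preliminaries}: since $K$ is infinite and $\cR$ is nonsingular, an invertible recombination of the pair (applied to every summand, followed by congruences restoring the standard blocks) lets us assume that $R_1$ is nondegenerate. This recombination preserves adjoint algebras and closures, and it maps $\Span_K\{R_1,R_2\}$ to itself. We then write $\cS=M_{\varepsilon_1}\perp\cdots\perp M_{\varepsilon_s}$ on $V_S=\bigoplus_i V_i$, so that $\cQ=\cR\perp\cS$ and \cref{prop:cross-closure} applies. Because the closure of an orthogonal sum is computed by iterating that proposition (the block projections onto every summand are adjoints), every $T\in\Cl(\cQ)$ is block diagonal, $T=T_R\oplus T_1\oplus\cdots\oplus T_s$. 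Each $T_i$ lies in $\Cl(M_{\varepsilon_i})$, and cross-adjoint conditions hold between every pair of summands.

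\emph{Case 1: all $\varepsilon_i=0$.} Here $\cS$ is the zero pair $Z_s$ on $K^s$, so \cref{lem:zero-block} gives $\Vis_{\cR}(\cQ)=\Cl(\cR)$ directly.

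\emph{Case 2: some $\varepsilon_k\geq1$.} The inclusion $\Span_K\{R_1,R_2\}\subseteq\Vis_{\cR}(\cQ)$ is immediate, since $aQ_1+bQ_2\in\Cl(\cQ)$ restricts to $aR_1+bR_2$. For the reverse inclusion, take $T\in\Cl(\cQ)$. By \cref{lem:closure-kronecker-block}, $T_k=aS_{\varepsilon_k,1}+bS_{\varepsilon_k,2}$. We then use the cross-adjoints $(U_v,Y_v)$ from the proof of \cref{thm:closure-collapse}, placed between $V_R$ and the summand $V_k$ and extended by zero on all other summands. This gives an element of $\cH_{RS}$ for the decomposition $\cR\perp\cS$, because the adjoint equations only involve the $R$- and $V_k$-blocks and the other blocks contribute zero. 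Condition \eqref{eq:cross-closure-one} then reads $Y_v^tT_k=T_RU_v$, which as in that proof yields $(T_R-aR_1-bR_2)v=0$ for all $v$. Hence $T_R=aR_1+bR_2$.

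The step that needs the most care is bookkeeping rather than new ideas. First, we must check that an adjoint supported on $V_R\oplus V_k$ and extended by zero is still an adjoint of the full sum; this holds because all $Q_i$ are block diagonal, so the off-diagonal equations to the other summands are $0=0$. Second, we must check that the normalization really transports $\Vis$ correctly. Under a recombination $(q_1,q_2)\mapsto(\alpha q_1+\beta q_2,\gamma q_1+\delta q_2)$, the closure is unchanged as a set of forms, and the pencil of $\cR$ is mapped onto itself. The congruences used to restore the standard singular blocks act only on $V_S$, so they commute with restriction to $V_R$. Thus both sides of the claimed equality are invariant, and the dichotomy follows.
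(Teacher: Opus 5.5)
Your proof is correct and follows essentially the same route as the paper: normalize so that $R_1$ is nondegenerate, handle the all-zero case with \cref{lem:zero-block}, and for a block with $\varepsilon_k\geq1$ restrict to $\cR\perp M_{\varepsilon_k}$ using zero-extended adjoints. The only difference is that you inline the cross-adjoints $(U_v,Y_v)$ instead of citing \cref{thm:closure-collapse} for that subsystem, which amounts to the same argument.
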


\begin{proof}
Apply an invertible linear recombination to the whole pair so that $R_1$ is nondegenerate, and restore the standard singular blocks by congruence, as explained after \eqref{eq:kronecker-decomposition}. The required equality is unchanged by these operations.

If all minimal indices are zero, the singular part is a zero pair and the assertion is \cref{lem:zero-block}.

Suppose that $\varepsilon_j\geq1$ for some $j$. The block projections belong to the adjoint algebra. Moreover, every adjoint of the subsystem
\[
 \cR\perp M_{\varepsilon_j}
\]
extends by zero on the remaining Kronecker summands to an adjoint of $\cQ$. Hence the restriction of any $T\in\Cl(\cQ)$ to this subsystem belongs to
\[
 \Cl(\cR\perp M_{\varepsilon_j}).
\]
By \cref{thm:closure-collapse}, its regular component lies in
\[
 \Span_K\{R_1,R_2\}.
\]
This proves one inclusion. The reverse follows because the pencil of $\cQ$ is contained in $\Cl(\cQ)$.
\end{proof}

When the regular pencil is two-dimensional, its coefficients determine the singular components as well.

\begin{theorem}[Complete closure dichotomy]
\label{thm:full-closure-dichotomy}
In the setting of \cref{thm:visibility-dichotomy}, assume in addition that
\[
 \dim_K\Span_K\{R_1,R_2\}=2.
\]
Write $\cQ=\{Q_1,Q_2\}$. Then
\[
 \Cl(\cQ)=
 \begin{cases}
 \{T_R\perp0:T_R\in\Cl(\cR)\},
 &\varepsilon_1=\cdots=\varepsilon_s=0,\\[2mm]
 \Span_K\{Q_1,Q_2\},
 &\max_i\varepsilon_i\geq1.
 \end{cases}
\]
\end{theorem}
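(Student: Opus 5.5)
The plan is to normalize first, exactly as in the proof of \cref{thm:visibility-dichotomy}. Apply an invertible recombination to the whole pair so that $R_1$ is nondegenerate, then restore the standard singular blocks by congruence. These operations preserve $\Cl(\cQ)$ and the span of $\{Q_1,Q_2\}$ (up to the same recombination). They also preserve the hypothesis $\dim_K\Span_K\{R_1,R_2\}=2$. After this normalization we may take
\[
 \cQ=\cR\perp M_{\varepsilon_1}\perp\cdots\perp M_{\varepsilon_s}.
\]

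In the case where all minimal indices are zero, the singular part is a zero pair $Z_s$, and the statement is exactly \cref{lem:zero-block}. The independence hypothesis is not needed here.

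Now suppose some $\varepsilon_j\geq1$, and let $T\in\Cl(\cQ)$.

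First, the block projections onto the summands lie in $A(\cQ)$. As in \cref{prop:cross-closure}, this makes $T$ block diagonal:
\[
 T=T_R\oplus T_1\oplus\cdots\oplus T_s .
\]
Every adjoint of a subsystem $\cR\perp M_{\varepsilon_i}$ extends by zero to an adjoint of $\cQ$. So $T_R\oplus T_i\in\Cl(\cR\perp M_{\varepsilon_i})$ for every $i$.

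Next, take $i=j$ and apply \cref{thm:closure-collapse}. This gives
\[
 T_R\oplus T_j=a(R_1\perp S_{\varepsilon_j,1})+b(R_2\perp S_{\varepsilon_j,2})
\]
for some $a,b\in K$. In particular $T_R=aR_1+bR_2$. Because $R_1,R_2$ are linearly independent, the pair $(a,b)$ is uniquely determined by $T_R$. This uniqueness is where the hypothesis $\dim_K\Span_K\{R_1,R_2\}=2$ enters.

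Then treat every other index $i$ with $\varepsilon_i\geq1$ the same way. Theorem~\ref{thm:closure-collapse} gives $T_R=a'R_1+b'R_2$ and $T_i=a'S_{\varepsilon_i,1}+b'S_{\varepsilon_i,2}$. Uniqueness forces $(a',b')=(a,b)$.

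For an index $i$ with $\varepsilon_i=0$, apply \cref{lem:zero-block} to the subsystem $\cR\perp M_0$; here $\cR$ is nonsingular and $M_0=Z_1$. It forces $T_i=0$. This agrees with the zero block of $aQ_1+bQ_2$.

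Assembling the blocks gives $T=aQ_1+bQ_2$. The reverse inclusion is the general containment of the pencil in the closure.

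The main point needing care is consistency across blocks. Each block-pair closure determines its own coefficients, and these must agree globally. Independence of $R_1,R_2$ is exactly what pins the coefficients down through the common regular component $T_R$. Without it, $T_R$ could be zero and the singular blocks could carry unrelated coefficients.

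A smaller point to check is that the normalizing recombination sends the span of $\{Q_1,Q_2\}$ to the span of the new pair and preserves independence of the regular members. Both are clear, because the recombination is invertible and acts simultaneously on every summand.
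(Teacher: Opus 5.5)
Your proposal is correct and follows essentially the paper's proof: normalize, settle the all-zero case by \cref{lem:zero-block}, block-diagonalize via the projections, apply \cref{thm:closure-collapse} to each $\cR\perp M_{\varepsilon_i}$, and use linear independence of $R_1,R_2$ to match coefficients. The differences are cosmetic. For $\varepsilon_i=0$, the paper kills $T_i$ directly with $\Cl(M_0)=0$ from \cref{lem:closure-kronecker-block}, rather than applying \cref{lem:zero-block} to $\cR\perp M_0$. It also takes the regular coefficients from \cref{thm:visibility-dichotomy}, rather than from a chosen index $j$.
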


\begin{proof}
Make the same simultaneous normalization as in the proof of \cref{thm:visibility-dichotomy}, so that $R_1$ is nondegenerate and the singular blocks are standard. If all minimal indices are zero, the singular part is a zero pair, so the first formula is \cref{lem:zero-block}.

Assume now that $\varepsilon_j\geq1$ for at least one $j$, and let $T\in\Cl(\cQ)$. The block projections onto the regular summand and the individual Kronecker summands belong to $A(\cQ)$. Hence
\[
 T=T_R\perp T_1\perp\cdots\perp T_s,
 \qquad
 T_R\in\Cl(\cR),\quad
 T_i\in\Cl(M_{\varepsilon_i}).
\]
By \cref{thm:visibility-dichotomy},
\[
 T_R=aR_1+bR_2
\]
for unique $a,b\in K$, because $R_1$ and $R_2$ are linearly independent.

If $\varepsilon_i=0$, then $T_i=0$ by \cref{lem:closure-kronecker-block}. If $\varepsilon_i\geq1$, every adjoint of the subsystem
\[
 \cR\perp M_{\varepsilon_i}
\]
extends by zero on the remaining Kronecker summands. Therefore
\[
 T_R\perp T_i\in\Cl(\cR\perp M_{\varepsilon_i}).
\]
By \cref{thm:closure-collapse}, there are $a_i,b_i\in K$ such that
\[
 T_R=a_iR_1+b_iR_2,
 \qquad
 T_i=a_iS_{\varepsilon_i,1}+b_iS_{\varepsilon_i,2}.
\]
The linear independence of $R_1,R_2$ gives $a_i=a$ and $b_i=b$. Hence
\[
 T=aQ_1+bQ_2.
\]
Thus $\Cl(\cQ)\subseteq\Span_K\{Q_1,Q_2\}$, and the reverse inclusion holds for every system.
\end{proof}

The closure formula gives the following extension of the construction in Section~\ref{sec:construction}.

\begin{corollary}[Singular extensions]
\label{cor:general-counterexamples}
Let $K$ be formally real and let $\cR=\{R_1,R_2\}$ be a nonsingular pair such that
\[
 \totalSgn(aR_1+bR_2)=0
 \qquad\text{for all }a,b\in K,
\]
while $\cR$ is not weakly hyperbolic. Then for every $\varepsilon\geq1$,
\[
 \cQ_\varepsilon=\{Q_{\varepsilon,1},Q_{\varepsilon,2}\}
 =\cR\perp M_\varepsilon,
 \qquad Q_{\varepsilon,i}=R_i\perp S_{\varepsilon,i},
\]
is singular and not weakly hyperbolic, whereas
\[
 \Cl(\cQ_\varepsilon)
 =
 \Span_K\{Q_{\varepsilon,1},Q_{\varepsilon,2}\}
\]
and every form in this closure has total signature zero.
\end{corollary}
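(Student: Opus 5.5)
The corollary splits into four claims about $\cQ_\varepsilon=\cR\perp M_\varepsilon$: it is singular, it is not weakly hyperbolic, its closure equals its pencil, and every closure form has total signature zero. I would prove each by reducing to a result already established, in that order.

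\emph{Singularity.} For every extension $L/K$ and all $a,b\in L$, the form $aQ_{\varepsilon,1}+bQ_{\varepsilon,2}$ is the orthogonal sum
\[
(aR_1+bR_2)\oplus(aS_{\varepsilon,1}+bS_{\varepsilon,2}),
\]
as in \cref{prop:Q-singular-pencil}. The second summand is always degenerate. Indeed, it has the block form with off-diagonal block $aL_{\varepsilon,0}+bL_{\varepsilon,1}\colon F_\varepsilon\to E_\varepsilon$. This block maps the $\varepsilon$-dimensional space $F_\varepsilon$ into the $(\varepsilon+1)$-dimensional space $E_\varepsilon$, so its transpose has a nonzero kernel in $E_\varepsilon$, which lies in the radical. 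Hence no member of the pencil over any $L$ is nondegenerate, and $\cQ_\varepsilon$ is singular.

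\emph{Failure of weak hyperbolicity.} This follows from \cref{lem:remove-hyperbolic}. The block $M_\varepsilon$ is hyperbolic for the decomposition $E_\varepsilon\oplus F_\varepsilon$, since both $S_{\varepsilon,i}$ vanish on $E_\varepsilon$ and on $F_\varepsilon$. So $\cQ_\varepsilon$ is weakly hyperbolic if and only if $\cR$ is, and the hypothesis says $\cR$ is not.

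\emph{Closure.} Since $K$ is formally real, it is infinite. A nonsingular pair therefore has a nondegenerate form $cR_1+dR_2$ in its pencil by \cite[Lemma~1.2]{First2020}. I would apply an invertible recombination $(Q_1,Q_2)\mapsto(cQ_1+dQ_2,\,c'Q_1+d'Q_2)$ so that the new first regular member is nondegenerate. This recombination preserves $\Cl$ and the pencil. It also sends $M_\varepsilon$ to a recombined block, which is congruent to the standard $M_\varepsilon$ by the remark after \eqref{eq:kronecker-decomposition}. A congruence $g$ on the singular summand transports adjoints and closures compatibly, so the equality $\Cl=\text{pencil}$ transfers back. \Cref{thm:closure-collapse} then gives $\Cl(\cQ_\varepsilon)=\Span_K\{Q_{\varepsilon,1},Q_{\varepsilon,2}\}$.

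Alternatively, one can avoid the congruence bookkeeping by checking directly from the proof of \cref{thm:closure-collapse} that the cross-adjoints $U_v,Y_v$ also work for a recombined block. I expect this transport step to be the only delicate point, but it is formal: congruence $g=\mathrm{id}\oplus h$ maps $A(\cQ)$ to $A(g\cdot\cQ)$ via $(\phi,\psi)\mapsto(g^{-1}\phi g,\ldots)$ and maps closures by $T\mapsto g^tTg$.

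\emph{Signatures.} A closure form is $aQ_{\varepsilon,1}+bQ_{\varepsilon,2}=(aR_1+bR_2)\perp(aS_{\varepsilon,1}+bS_{\varepsilon,2})$. The first summand has total signature zero by hypothesis. The second is hyperbolic in the sense of \cref{def:hyperbolic-systems}, since it is a member of the closure of a hyperbolic system (\cref{lem:hyperbolic-closure}). A form vanishing on both $U$ and $W$ with $V=U\oplus W$ has signature zero at every ordering, because its nondegenerate quotient on $V/\rad$ is hyperbolic in the usual sense (this is the same fact used in \cref{cor:closure-obstruction}). Signatures are additive under orthogonal sums, so the total signature is zero.
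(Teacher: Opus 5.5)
Your proposal is correct and follows the paper's proof essentially step for step. Singularity comes from the $M_\varepsilon$ summand, failure of weak hyperbolicity from \cref{lem:remove-hyperbolic}, the closure from normalization, restoration of the standard block by congruence and \cref{thm:closure-collapse}, and the zero signatures from the hyperbolic $M_\varepsilon$ part together with the hypothesis on the pencil of $\cR$. You also make explicit two points the paper leaves implicit: the dimension count showing that $aS_{\varepsilon,1}+bS_{\varepsilon,2}$ is degenerate, and the transport of adjoints and closures under the congruence $g=\mathrm{id}\oplus h$.
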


\begin{proof}
A formally real field is infinite. Normalize the whole pair $\cQ_\varepsilon$ so that its first regular member is nondegenerate, and restore the standard singular block as explained after \eqref{eq:kronecker-decomposition}. Then \cref{thm:closure-collapse} applies, and its conclusion transfers back to the original pair. Every pencil member is singular because of its $M_\varepsilon$ summand. The latter is hyperbolic and has zero signature, so every form in $\Cl(\cQ_\varepsilon)$ has total signature zero.

The system $M_\varepsilon$ is hyperbolic. By \cref{lem:remove-hyperbolic},
\[
 \cQ_\varepsilon\text{ is weakly hyperbolic}
 \quad\Longleftrightarrow\quad
 \cR\text{ is weakly hyperbolic},
\]
which is false by hypothesis.
\end{proof}

Applied to the pair in \eqref{eq:R-matrices}, this gives counterexamples of dimension $2\varepsilon+5$ for every $\varepsilon\geq1$. The case $\varepsilon=1$ is \cref{thm:main-intro}.

It remains to prove the dimension bound in \cref{thm:optimal-intro}. We begin with nonsingular pairs in dimension two.

\begin{lemma}
\label{lem:dimension-two-closure}
Let $K$ be an infinite field of characteristic different from $2$, and let
\[
 \cR=\{R_1,R_2\}
\]
be a nonsingular pair on a two-dimensional vector space. Then
\[
 \Cl(\cR)=\Span_K\{R_1,R_2\}.
\]
\end{lemma}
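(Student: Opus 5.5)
Since $K$ is infinite and $\cR$ is nonsingular, I would first normalize by an invertible recombination, as explained after \eqref{eq:kronecker-decomposition}, so that $R_1$ is nondegenerate. This changes neither the closure nor the span. With $J=R_1^{-1}R_2$, the same computation as in the proof of \cref{prop:R-adjoint-closure} shows that $A(\cR)$ consists of the pairs $(\Phi,\Psi^{\op})$ with $\Phi J=J\Phi$ and $\Psi^t=R_1\Phi R_1^{-1}$. Consequently $(J,(R_1JR_1^{-1})^{t\,\op})$ and $(I,I^{\op})$ lie in $A(\cR)$. Moreover, every polynomial $f(J)$ gives an adjoint $(f(J),\Psi_f^{\op})$, where $\Psi_f^t=R_1f(J)R_1^{-1}$.

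I would then split into two cases according to whether $J$ is cyclic on $V_R=K^2$, i.e.\ whether $J$ is a scalar or not.

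\emph{Case 1: $J$ is not scalar.} Then $J$ is cyclic, and its centralizer is $K[J]=\Span_K\{I,J\}$. Let $T\in\Cl(\cR)$ and set $S=R_1^{-1}T$. The closure equation for the adjoint coming from $J$ reads $R_1JR_1^{-1}T=TJ$, that is, $JS=SJ$. Hence $S\in K[J]$, say $S=aI+bJ$, and so $T=aR_1+bR_2$. This case needs no symmetry argument at all, because $R_1^{-1}T$ lies in the commutant of $J$ and that commutant is already two-dimensional. The closure equations for the other elements of $A(\cR)$ are automatically satisfied by the reverse inclusion.

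\emph{Case 2: $J=\lambda I$.} Then $R_2=\lambda R_1$ and the pencil is one-dimensional. In this case $A(\cR)$ is the full set $\{(\Phi,\Psi^{\op}):\Psi^t=R_1\Phi R_1^{-1}\}$, with $\Phi\in\End(V_R)$ arbitrary. The closure equation becomes $R_1\Phi R_1^{-1}T=T\Phi$ for all $\Phi$, i.e.\ $R_1^{-1}T$ commutes with every $\Phi$. So $R_1^{-1}T$ is a scalar, and $T\in KR_1=\Span_K\{R_1,R_2\}$.

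The only delicate point is Case~1: one must check that the closure constraint coming from the adjoint built from $J$ is exactly the commutation $JS=SJ$. This follows by rewriting $\Psi^tT=T\Phi$ as $R_1\Phi R_1^{-1}T=T\Phi$ and multiplying on the left by $R_1^{-1}$. One must also use that a non-scalar $2\times2$ matrix is cyclic, so that its centralizer is $K[J]$. Both facts are standard, so I expect no real obstacle beyond verifying them. Dimension two is what makes the argument work: every $2\times2$ matrix is either scalar or cyclic.
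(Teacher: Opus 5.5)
Your proof is correct and follows essentially the paper's route. You normalize so that $R_1$ is nondegenerate, identify the first components of $A(\cR)$ with the centralizer of $J=R_1^{-1}R_2$, rewrite the closure equation as $R_1^{-1}T$ commuting with those components, and split into the scalar and non-scalar cases; the only cosmetic difference is that in the non-scalar case you use the single adjoint built from $J$ instead of the full bicommutant, which is equivalent because that centralizer is $K[J]$.
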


\begin{proof}
After an invertible linear recombination, assume that $R_1$ is nondegenerate and put
\[
 J=R_1^{-1}R_2.
\]
The first adjoint equation determines $\psi$ from $\phi$; the second is equivalent to
\[
 \phi J=J\phi.
\]
Thus the first components of the adjoint algebra form the centralizer
\[
 \Cent_{\Mat_2(K)}(J).
\]

Let $T\in\Cl(\cR)$ and put
\[
 C=R_1^{-1}T.
\]
Since
\[
 \psi^t=R_1\phi R_1^{-1},
\]
the closure equation $\psi^tT=T\phi$ is equivalent to
\[
 \phi C=C\phi
 \qquad
 \text{for every }\phi\in\Cent_{\Mat_2(K)}(J).
\]
Hence
\[
 C\in
 \Cent_{\Mat_2(K)}
 \bigl(\Cent_{\Mat_2(K)}(J)\bigr).
\]

If $J$ is scalar, its centralizer is $\Mat_2(K)$ and its bicommutant is the scalar algebra. If $J$ is not scalar, its minimal polynomial has degree $2$, so
\[
 \Cent_{\Mat_2(K)}(J)=K[J],
\]
and the bicommutant is again $K[J]$. Thus in either case
\[
 T\in\Span_K\{R_1,R_2\}.
\] The reverse inclusion follows from the definition of the closure.
\end{proof}

\begin{proposition}
\label{prop:minimal-regular-obstruction}
Let $K$ be a number field or a real closed field, and let $\cR$ be a nonsingular pair on a vector space $V_R$. Suppose that every form in the pencil of $\cR$ has total signature zero, but $\cR$ is not weakly hyperbolic. Then
\[
 \dim_K V_R\geq4.
\]
For formally real number fields and real closed fields, the bound is sharp.
\end{proposition}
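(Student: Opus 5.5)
We argue by contraposition. Suppose $\dim_K V_R\leq3$, that $\cR$ is nonsingular, and that every form in the pencil of $\cR$ has total signature zero. We show that $\cR$ is weakly hyperbolic. The tool is \cref{thm:first-B}: it suffices to show that every $q\in\Cl(\cR)$ has total signature zero. If $K$ has no ordering, every condition $\totalSgn q=0$ is vacuous, and First's criterion then gives weak hyperbolicity directly. So we may assume $K$ is formally real, and in particular infinite. We normalize so that $R_1$ is nondegenerate and put $J=R_1^{-1}R_2$.

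We split into cases by dimension.

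\textbf{Odd dimension.} If $\dim V_R$ is $1$ or $3$, the nondegenerate form $R_1$ has odd dimension. Its signature at any ordering is then odd, hence nonzero. This contradicts the hypothesis, so this case cannot occur.

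\textbf{Dimension zero.} This case is trivial.

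\textbf{Dimension two.} \cref{lem:dimension-two-closure} gives $\Cl(\cR)=\Span_K\{R_1,R_2\}$. By hypothesis every form in this span has total signature zero, so \cref{thm:first-B} yields weak hyperbolicity. This gives the bound $\dim_K V_R\geq4$.

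For sharpness, we use the pair $\cR$ of \eqref{eq:R-matrices}. Its pencil consists of hyperbolic forms by \cref{prop:R-pencil}, so every pencil form has total signature zero. Over any formally real field, \cref{cor:R-not-weak} shows that $\cR$ is not weakly hyperbolic. Its dimension is $4$. Formally real number fields and real closed fields are formally real, so the bound is attained there.

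The main point to check is the two-dimensional case. One must confirm that \cref{thm:first-B} requires only nonsingularity, which we have, and that \cref{lem:dimension-two-closure} covers both the case where $J$ is scalar and the case where it is not. Both are settled by the lemma as stated. A secondary check is that "signature of a degenerate form" causes no trouble here. In the odd-dimensional case only the nondegenerate member $R_1$ is used, so its signature has the parity of $\dim V_R$.
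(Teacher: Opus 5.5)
Your proposal is correct and takes essentially the same route as the paper. You dispose of the orderless case, exclude dimensions $0$, $1$, $3$ by hyperbolicity of the zero system and the odd signature of a nondegenerate pencil member, handle dimension $2$ via \cref{lem:dimension-two-closure} together with \cref{thm:first-B}, and cite the pair \eqref{eq:R-matrices} for sharpness. The only difference is cosmetic: for the orderless case the paper invokes \cref{thm:first-A}, while your appeal to \cref{thm:first-B} is equally valid, since its hypotheses (a nonsingular pair over a number field or real closed field) hold.
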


\begin{proof}
If $K$ has no ordering, every total signature is vacuously zero, including that of the involution-trace form of $A(\cR)$. By \cref{thm:first-A}, $\cR$ would then be weakly hyperbolic. Hence $K$ must admit an ordering.

Dimension $0$ is impossible because the zero system is already hyperbolic. Since $K$ is infinite, the pencil contains a nondegenerate form. In odd dimension such a form has odd signature at every ordering, excluding dimensions $1$ and $3$.

If $\dim_K V_R=2$, \cref{lem:dimension-two-closure} gives
\[
 \Cl(\cR)=\Span_K\{R_1,R_2\}.
\]
Thus every form in $\Cl(\cR)$ has total signature zero. First's closure criterion, \cref{thm:first-B}, then implies that $\cR$ is weakly hyperbolic, a contradiction.

For every formally real number field and every real closed field, the four-dimensional pair in \eqref{eq:R-matrices} shows that the bound is sharp.
\end{proof}

\begin{proof}[Proof of \cref{thm:optimal-intro}]
If $K$ has no ordering, First's general criterion, \cref{thm:first-A}, implies that every finite system is weakly hyperbolic, so the hypotheses are impossible. We may therefore assume that $K$ admits an ordering.

Write the Kronecker decomposition as
\[
 \cQ
 \cong
 \cR\perp
 M_{\varepsilon_1}\perp\cdots\perp M_{\varepsilon_s}.
\]
The singular part is hyperbolic, so repeated application of
\cref{lem:remove-hyperbolic} gives
\[
 \cQ\text{ is weakly hyperbolic}
 \quad\Longleftrightarrow\quad
 \cR\text{ is weakly hyperbolic}.
\]
Hence $\cR$ is not weakly hyperbolic; in particular, the regular part is nonzero.

If every $\varepsilon_i$ is zero, the singular part is the zero pair $Z_s$. By \cref{lem:zero-block},
\[
 \Cl(\cQ)=\{T_R\perp0:T_R\in\Cl(\cR)\}.
\]
Since $\sgnop_P(T_R\perp0)=\sgnop_P(T_R)$ at every ordering $P$, the hypothesis on $\Cl(\cQ)$ forces zero total signature on $\Cl(\cR)$. First's closure criterion would make $\cR$ weakly hyperbolic, a contradiction. Therefore
\[
 \varepsilon_i\geq1
\]
for at least one $i$, and the singular part has dimension at least $3$.

Because the pencil of $\cQ$ lies in $\Cl(\cQ)$, all its members have zero total signature. The singular Kronecker pencils are hyperbolic, so every form in the pencil of $\cR$ also has zero total signature. Since $\cR$ is not weakly hyperbolic, \cref{prop:minimal-regular-obstruction} gives
\[
 \dim_K V_R\geq4.
\]
Consequently,
\[
 \dim_K V
 \geq4+3=7.
\]
For every formally real number field and every real closed field, the example of \cref{thm:main-intro} has dimension $4+3=7$, so the bound is sharp.
\end{proof}

\begin{remark}
The proof uses First's nonsingular closure criterion only for zero singular blocks and for the two-dimensional regular part. The same dimension bound therefore holds over fields satisfying condition \emph{(E)} of \cite[Section~6]{First2020}, by \cite[Theorem~6.1]{First2020}. If such a field has no ordering, the statement is vacuous by \cref{thm:first-A}; otherwise it is infinite, as required by the normalization argument.
\end{remark}

\section{The adjoint algebra and trace obstruction}
\label{sec:adjoint-obstructions}

We return to the seven-dimensional pair $\cQ$ of \eqref{eq:Q-definition}. Write a pair of $7\times7$ matrices in $4+3$ block form as
\[
 \Phi=
 \begin{pmatrix}A&U\\W&C\end{pmatrix},
 \qquad
 \Psi=
 \begin{pmatrix}B&X\\Y&D\end{pmatrix}.
\]
The adjoint equations for $Q_i=R_i\oplus S_i$ split into
\begin{equation}
\label{eq:block-adjoint-equations}
 \begin{aligned}
 B^tR_i&=R_iA,&
 D^tS_i&=S_iC,\\
 Y^tS_i&=R_iU,&
 X^tR_i&=S_iW
 \end{aligned}
 \qquad(i=1,2).
\end{equation}
The first two equations concern the diagonal summands; the last two describe the cross-adjoints.

\begin{proposition}[Explicit adjoint algebra]
\label{prop:full-adjoint}
The algebra $A(\cQ)$ has dimension $18$. Its elements are precisely the pairs $(\Phi,\Psi^{\op})$ with the following blocks.

The regular diagonal blocks are
\[
 A=
 \begin{pmatrix}
 x&0&0&-z\\
 -y&u&v&w\\
 0&0&u&v\\
 0&0&0&u
 \end{pmatrix},
 \qquad
 B=
 \begin{pmatrix}
 x&0&0&y\\
 z&u&v&w\\
 0&0&u&v\\
 0&0&0&u
 \end{pmatrix}.
\]
The singular diagonal blocks are
\[
 C=
 \begin{pmatrix}
 p&0&q\\
 0&p&r\\
 0&0&s
 \end{pmatrix},
 \qquad
 D=
 \begin{pmatrix}
 s&0&q\\
 0&s&r\\
 0&0&p
 \end{pmatrix}.
\]
The first cross-adjoint space is
\[
 U=
 \begin{pmatrix}
 0&0&-a_0\\
 0&0&a_1\\
 0&0&a_3\\
 0&0&a_2
 \end{pmatrix},
 \qquad
 Y=
 \begin{pmatrix}
 a_0&a_2&a_3&a_1\\
 0&0&a_2&a_3\\
 0&0&0&0
 \end{pmatrix},
\]
and the opposite cross-adjoint space is
\[
 W=
 \begin{pmatrix}
 -b_0&b_3&b_2&b_1\\
 0&0&b_3&b_2\\
 0&0&0&0
 \end{pmatrix},
 \qquad
 X=
 \begin{pmatrix}
 0&0&b_0\\
 0&0&b_1\\
 0&0&b_2\\
 0&0&b_3
 \end{pmatrix}.
\]
All eighteen parameters
\[
 x,y,z,u,v,w,p,q,r,s,a_0,a_1,a_2,a_3,b_0,b_1,b_2,b_3
\]
are arbitrary elements of $K$.
\end{proposition}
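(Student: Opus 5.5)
The plan is to use the block decomposition \eqref{eq:block-adjoint-equations}: the adjoint equations for $Q_i=R_i\oplus S_i$ split into four independent linear systems, one for each pair of blocks $(A,B)$, $(C,D)$, $(U,Y)$ and $(W,X)$. Hence $A(\cQ)$ is, as a vector space, the direct sum of $A(\cR)$, $A(\cS)$, $\cH_{RS}$ and $\cH_{SR}$ (this is the decomposition already used in the proof of \cref{prop:cross-closure}). The diagonal blocks are given by \cref{prop:R-adjoint-closure,prop:S-adjoint-closure}: six parameters $x,y,z,u,v,w$ and four parameters $p,q,r,s$. It remains to show that each cross-adjoint space is four-dimensional with the stated parametrization. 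The total dimension is then $6+4+4+4=18$.

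For $\cH_{RS}$, I will use that $R_1$ is invertible, so $Y^tS_1=R_1U$ gives $U=R_1^{-1}Y^tS_1$. Substituting into the second equation yields $Y^tS_2=JY^t S_1$ with $J=R_1^{-1}R_2$. Write $Y^t=(y_1\,|\,y_2\,|\,y_3)$ with columns in $K^4$. Then $Y^tS_1=(y_3\,|\,0\,|\,y_1)$ and $Y^tS_2=(0\,|\,y_3\,|\,y_2)$. The condition therefore reads
\[
 Jy_3=0,\qquad y_3=0,\qquad y_2=Jy_1 .
\]
So $y_3=0$, $y_1\in K^4$ is free (four parameters), and $y_2=Jy_1$. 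Labelling $y_1=(a_0,a_2,a_3,a_1)^t$, the matrix $J$ from \cref{prop:R-adjoint-closure} gives $y_2=(0,0,a_2,a_3)^t$, which matches the stated $Y$. The block $U=R_1^{-1}(y_3\,|\,0\,|\,y_1)=(0\,|\,0\,|\,R_1^{-1}y_1)$, and $R_1^{-1}=R_1$ gives third column $(-a_0,a_1,a_3,a_2)^t$, as claimed. A check against \cref{lem:cross-adjoint} ($a_2=1$, all other $a_i=0$) confirms the labelling.

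For $\cH_{SR}$, the equations are $X^tR_i=S_iW$. Set $X':=X^t\in\Mat_{3\times4}$ with rows $\xi_1,\xi_2,\xi_3$, and let the rows of $W$ be $w_1,w_2,w_3$. Then $S_1W$ has rows $w_3,0,w_1$ and $S_2W$ has rows $0,w_3,w_2$. The equations become
\[
 \xi_1R_1=w_3,\quad \xi_2R_1=0,\quad \xi_3R_1=w_1,\qquad
 \xi_1R_2=0,\quad \xi_2R_2=w_3,\quad \xi_3R_2=w_2 .
\]
From these, $\xi_2=0$ and hence $w_3=0$. This forces $\xi_1R_1=0$, so $\xi_1=0$. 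The row $\xi_3$ is free (four parameters), with $w_1=\xi_3R_1$ and $w_2=\xi_3R_2$. Writing $\xi_3=(b_0,b_1,b_2,b_3)$ gives $w_1=(-b_0,b_3,b_2,b_1)$ and $w_2=(0,0,b_3,b_2)$, exactly the stated $W$ and $X$.

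The converse needs no separate argument: each step above was an equivalence, so every choice of the eighteen parameters gives an adjoint pair. The only real obstacle is bookkeeping, namely matching the labels $a_i,b_i$ and the signs coming from $R_1^{-1}=R_1$. Reducing to the column and row equations above keeps this transparent.
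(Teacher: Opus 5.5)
Your overall route is the paper's: split the adjoint equations into the four block systems, quote the diagonal blocks, and solve each cross system column by column (respectively row by row) using the invertibility of $R_1$. Your treatment of $\cH_{SR}$ is correct and gives exactly the stated $W$ and $X$.

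\textbf{There is a genuine error in your treatment of $\cH_{RS}$.} Substituting $U=R_1^{-1}Y^tS_1$ into $Y^tS_2=R_2U$ gives
$Y^tS_2=R_2R_1^{-1}Y^tS_1$.
Since $R_1$ and $R_2$ are symmetric, $R_2R_1^{-1}=(R_1^{-1}R_2)^t=J^t$, not $J=R_1^{-1}R_2$. With the $J$ from the earlier proposition, your relation $y_2=Jy_1$ gives $y_2=(0,a_3,a_1,0)^t$, not $(0,0,a_2,a_3)^t$. The vector you wrote down is actually $J^ty_1$, so two mistakes cancel.

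Taken literally, your derivation produces a wrong $Y$. Take $a_2=1$ and all other parameters zero, so $y_1=e_2$. Your relation gives $y_2=Je_2=0$, hence $Y^tS_2=0$. But $U=(0\,|\,0\,|\,e_4)$, so $R_2U$ has third column $e_3\neq0$. Your ``check'' against the cross-adjoint lemma only tested the final matrices, not the relation you derived: the lemma's element has $y_2=e_3=J^te_2$, whereas $Je_2=0$. For the same reason, your claim that every step was an equivalence, which you use for the converse, is false for the relation as you stated it.

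The fix is one line. Replace $J$ by $J^t=R_2R_1^{-1}$, so that $y_3=0$ and $y_2=R_2R_1^{-1}y_1=(0,0,a_2,a_3)^t$. The pair of equations $U=R_1^{-1}Y^tS_1$ and $Y^tS_2=R_2R_1^{-1}Y^tS_1$ is then jointly equivalent to the original cross system, so the converse does follow.

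Alternatively, take $h=R_1^{-1}y_1$, the third column of $U$, as the free parameter, as the paper does. The column equations then give $U=(0\,|\,0\,|\,h)$ and $Y^t=(R_1h\,|\,R_2h\,|\,0)$, which avoids $J$ altogether. With this correction your argument is complete and coincides with the paper's proof.
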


\begin{proof}
The four systems in \eqref{eq:block-adjoint-equations} involve disjoint block variables. The two diagonal systems give the six- and four-parameter solutions from \cref{prop:R-adjoint-closure,prop:S-adjoint-closure}.

For the first cross system, write the columns of $U$ as $u_1,u_2,h$ and those of $Y^t$ as $y_1,y_2,y_3$. The equations $Y^tS_i=R_iU$ become
\[
 \begin{aligned}
 R_1u_1&=y_3,& R_1u_2&=0,& R_1h&=y_1,\\
 R_2u_1&=0,& R_2u_2&=y_3,& R_2h&=y_2.
 \end{aligned}
\]
Since $R_1$ is invertible, $u_2=0$, then $y_3=0$, and then $u_1=0$. Thus
\[
 U=(0,0,h),\qquad Y^t=(R_1h,R_2h,0),
\]
with $h\in K^4$ arbitrary. Taking $h=(-a_0,a_1,a_3,a_2)^t$ gives the displayed matrices. Transposing the opposite cross system gives $R_iX=W^tS_i$, so the same calculation yields
\[
 X=(0,0,k),\qquad W^t=(R_1k,R_2k,0).
\]
Taking $k=(b_0,b_1,b_2,b_3)^t$ gives the stated parametrization. Conversely, these formulas satisfy all four adjoint equations. The independent dimensions are therefore $6$, $4$, $4$, and $4$, giving
\[
 \dim_K A(\cQ)=6+4+4+4=18.
\]
\end{proof}

Let
\[
 \mathcal B=
 (E_x,E_y,E_z,E_u,E_v,E_w,
 E_p,E_q,E_r,E_s,
 E_{a_0},\ldots,E_{a_3},E_{b_0},\ldots,E_{b_3})
\]
be the basis obtained by setting one parameter in \cref{prop:full-adjoint} equal to $1$ and all others equal to $0$.

\begin{proposition}[Radical and semisimple quotient]
\label{prop:radical-structure}
Let $\mathfrak r\subseteq A(\cQ)$ be the subspace defined by
\[
 x=u=p=s=0.
\]
Then $\mathfrak r$ is a $\sigma$-stable nilpotent ideal, $\mathfrak r^5=0$, and
\[
 \mathfrak r=\Jac A(\cQ),
 \qquad
 A(\cQ)/\mathfrak r\cong K^4.
\]
Under the quotient map
\[
 \pi:A(\cQ)\longrightarrow K^4,
 \qquad
 a\longmapsto(x,u,p,s),
\]
multiplication is coordinatewise and the induced involution is
\[
 (x,u,p,s)^\sigma=(x,u,s,p).
\]
\end{proposition}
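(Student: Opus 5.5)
We describe the multiplication in block form and read off how the parameters $x,u,p,s$ behave under products. Writing $a=(\Phi,\Psi^{\op})$ and $a'=(\Phi',{\Psi'}^{\op})$, the product is $(\Phi\Phi',(\Psi'\Psi)^{\op})$. The first component $\Phi$ already determines $a$: the map $a\mapsto\Phi$ is injective by \cref{prop:full-adjoint}, since every parameter appears in $\Phi$. So it suffices to compute $\Phi\Phi'$ in $4+3$ block form,
\[
 \Phi\Phi'=\begin{pmatrix}AA'+UW'&AU'+UC'\\WA'+CW'&WU'+CC'\end{pmatrix}.
\]
The key point is that $U$ has nonzero entries only in its third column, and $W$ only in its first two rows. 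Hence $UW'$ is supported in the product of column $3$ of $U$ with row $3$ of $W'$, which is zero. Also $WU'$ is supported in rows $1,2$ and column $3$ of a $3\times3$ matrix, so it is strictly upper triangular.

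The diagonal entries of $AA'$ are $xx',uu',uu',uu'$, and those of $CC'$ are $pp',pp',ss'$. Because $A,A',C,C'$ are upper triangular except for the entry $-y$ in $A$, the diagonal of $AA'$ is the product of diagonals up to that entry. The $(2,1)$ entry does not affect the diagonal. It follows that $\pi$, which reads off the diagonal entries $(x,u,p,s)$ of $\Phi$, is multiplicative. It is clearly linear and surjective, and it sends the identity $(I,I^{\op})$ (where $x=u=p=s=1$) to $(1,1,1,1)$. Therefore $\mathfrak r=\ker\pi$ is a two-sided ideal and $A(\cQ)/\mathfrak r\cong K^4$ with coordinatewise product.

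For the involution, $a^\sigma=(\Psi,\Phi^{\op})$. The diagonal parameters of $\Psi$ are read from $B$ and $D$: $B$ has diagonal $(x,u,u,u)$ and $D$ has diagonal $(s,s,p)$. So $a^\sigma$ has parameters $(x,u,s,p)$, which gives the stated induced involution. It also shows that $\mathfrak r$ is $\sigma$-stable.

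For nilpotency, I would order a basis of $K^7$ so that every $\Phi$ with $x=u=p=s=0$ becomes strictly triangular. Equivalently, I would find a flag of $K^7$ preserved by all of $A(\cQ)$ on which $\mathfrak r$ acts by strictly lowering. A natural candidate is a grading: in the regular block, $e_1$ and $e_2$ are linked by $y,z$, and $e_4\to e_3\to e_2$ by $v,w$. In the singular block, $f_3\mapsto f_1,f_2$, and the cross maps are $f_3\to V_R$ and $V_R\to\Span\{f_1,f_2\}$. I would assign levels along the chain $f_3\to e_4\to e_3\to e_2\to e_1\to\{f_1,f_2\}$, check that each basis matrix $E_y,E_z,E_v,E_w,E_q,E_r,E_{a_i},E_{b_i}$ strictly decreases the level, and bound the length of the chain to get $\mathfrak r^5=0$. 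The maps $E_y$ and $E_z$ go in opposite directions between $e_1$ and $e_4$/$e_2$, so they need care. This is the main obstacle. If a linear grading fails, I will instead directly verify that products of five basis elements of $\mathfrak r$ vanish, using the support patterns above: each factor's nonzero columns must match the previous factor's nonzero rows, and chains die within five steps.

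Once $\mathfrak r$ is known to be a nilpotent ideal, it lies in $\Jac A(\cQ)$. Since $A(\cQ)/\mathfrak r\cong K^4$ is semisimple, $\Jac A(\cQ)\subseteq\mathfrak r$, giving equality.
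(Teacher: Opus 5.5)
Your arguments for multiplicativity of $\pi$, the induced involution, $\sigma$-stability, and the final step $\Jac A(\cQ)=\mathfrak r$ are correct and follow the paper's route. Your support argument ($UW'=0$, and $WU'$ strictly upper triangular) usefully fills in what the paper only calls ``multiplying the block matrices.''

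\textbf{The gap is nilpotency.} The statement asserts $\mathfrak r^5=0$, and the inclusion $\mathfrak r\subseteq\Jac A(\cQ)$ depends on it, but you never establish it. The grading you sketch, $f_3\to e_4\to e_3\to e_2\to e_1\to\{f_1,f_2\}$, fails as written. For $a\in\mathfrak r$, the first component sends $e_1\mapsto -ye_2-b_0f_1$ and $e_2\mapsto b_3f_1$. The second component sends $e_1\mapsto ze_2+a_0f_1$ and $e_2\mapsto a_2f_1$. So $e_1$ must lie \emph{above} $e_2$, and nothing ever maps $e_2$ to $e_1$. A six-level chain would in any case only yield $\mathfrak r^6=0$.

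The ``obstacle'' you name for $E_y,E_z$ is illusory. In the first component $E_z$ sends $e_4\mapsto -e_1$ and $E_y$ sends $e_1\mapsto -e_2$; in the second component the roles swap. Both are compatible with $e_4$ above $e_1$ above $e_2$. Your fallback of checking five-fold products is not carried out, so the proof is incomplete exactly at the step that carries the claim $\mathfrak r^5=0$.

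\textbf{The fix.} Place $e_1$ at the same level as $e_3$, which gives the paper's flag
\[
 K^7\supset\Span\{e_1,e_2,e_3,e_4,f_1,f_2\}\supset\Span\{e_1,e_2,e_3,f_1,f_2\}\supset\Span\{e_2,f_1,f_2\}\supset\Span\{f_1\}\supset0 .
\]
From \cref{prop:full-adjoint} with $x=u=p=s=0$, the first component acts as follows:
\[
 \begin{aligned}
 f_3&\mapsto -a_0e_1+a_1e_2+a_3e_3+a_2e_4+qf_1+rf_2,\\
 e_4&\mapsto -ze_1+we_2+ve_3+b_1f_1+b_2f_2,\\
 e_3&\mapsto ve_2+b_2f_1+b_3f_2,\\
 e_1&\mapsto -ye_2-b_0f_1,\\
 e_2&\mapsto b_3f_1,
 \end{aligned}
\]
and $f_1,f_2\mapsto0$. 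Each term of the flag is carried into the next, so every product of five first components vanishes. You already noted that $a\mapsto\Phi$ is injective on $A(\cQ)$, so this alone gives $\mathfrak r^5=0$. The paper checks that both $\Phi$ and $\Psi$ lower the flag; your injectivity remark makes the $\Psi$ check unnecessary.
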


\begin{proof}
Multiplying the block matrices in \cref{prop:full-adjoint}, with the reversed order in the second component of the adjoint algebra, gives
\[
 \pi(ab)=
 (x_ax_b,u_au_b,p_ap_b,s_as_b)
 =\pi(a)\pi(b).
\]
Thus $\pi$ is a surjective algebra homomorphism with kernel $\mathfrak r$, and
\[
 A(\cQ)/\mathfrak r\cong K^4.
\]
The formulas show that $\sigma$ fixes $x$ and $u$, interchanges $p$ and $s$, and preserves the subspace $x=u=p=s=0$. Hence $\mathfrak r$ is $\sigma$-stable.

For nilpotence, let $e_1,\ldots,e_7$ be the standard basis of $K^7$ and consider the flag
\[
 \begin{aligned}
 F_0&=K^7,\\
 F_1&=\Span_K\{e_1,e_2,e_3,e_4,e_5,e_6\},\\
 F_2&=\Span_K\{e_1,e_2,e_3,e_5,e_6\},\\
 F_3&=\Span_K\{e_2,e_5,e_6\},\\
 F_4&=\Span_K\{e_5\},
 \qquad F_5=0.
 \end{aligned}
\]
If $j=(\Phi_j,\Psi_j^{\op})\in \mathfrak r$, the matrices in \cref{prop:full-adjoint} give
\[
 \Phi_j(F_i)\subseteq F_{i+1},
 \qquad
 \Psi_j(F_i)\subseteq F_{i+1}
 \qquad(0\leq i\leq4).
\]
Hence every product of five first components from $\mathfrak r$ is zero, and likewise for five second components in either order. Since multiplication in $A(\cQ)$ is
\[
 (\Phi,\Psi^{\op})(\Phi',({\Psi'})^{\op})
 =
 (\Phi\Phi',(\Psi'\Psi)^{\op}),
\]
we obtain $\mathfrak r^5=0$.

Thus $\mathfrak r\subseteq\Jac A(\cQ)$. Since $A(\cQ)/\mathfrak r\cong K^4$ is semisimple, the reverse inclusion also holds, and $\mathfrak r=\Jac A(\cQ)$.
\end{proof}

\begin{proposition}[Left regular trace and involution-trace form]
\label{prop:trace-form}
For an element $a\in A(\cQ)$ with the parameters of \cref{prop:full-adjoint},
\begin{equation}
\label{eq:regular-trace}
 \Tr_{A(\cQ)/K}(a)=3x+7u+7p+s.
\end{equation}
Consequently,
\[
 q_{A(\cQ),\sigma}(a)=3x^2+7u^2+8ps,
\]
and, as a quadratic form on $A(\cQ)$,
\begin{equation}
\label{eq:trace-form-explicit}
 q_{A(\cQ),\sigma}
 \cong
 \langle3,7\rangle
 \perp
 \begin{pmatrix}0&4\\4&0\end{pmatrix}
 \perp
 \langle0\rangle^{14}.
\end{equation}
\end{proposition}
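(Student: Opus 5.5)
The plan is to compute the left regular trace using the radical decomposition from \cref{prop:radical-structure}, rather than by multiplying out an $18\times18$ matrix. Left multiplication by a nilpotent element is nilpotent, hence traceless. So $\Tr_{A(\cQ)/K}$ vanishes on $\mathfrak r=\Jac A(\cQ)$, and the trace factors through $\pi$. It is therefore a linear form $\lambda_xx+\lambda_uu+\lambda_pp+\lambda_ss$. To find $\lambda_x,\lambda_u,\lambda_p,\lambda_s$, I would lift the four primitive idempotents of $K^4$ to $A(\cQ)$. The natural lifts are $E_x,E_u,E_p,E_s$, and I would first check that these basis elements are idempotent, which is visible from the block forms. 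For an idempotent $e$, the trace of left multiplication equals $\dim_K eA(\cQ)$. So $\lambda_\bullet=\dim_K E_\bullet A(\cQ)$, and the computation reduces to counting how many of the $18$ basis vectors $E_\bullet A(\cQ)$ captures.

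The key step is these four dimension counts. I would do them with Peirce decompositions: $A(\cQ)=\bigoplus_{i,j}E_iA(\cQ)E_j$ over the complete orthogonal family $E_x,E_u,E_p,E_s$. Each parameter line of \cref{prop:full-adjoint} should lie in a single Peirce component. The diagonal parameters $x,u,p,s$ go to $E_iAE_i$, and the nilpotent parameters $y,z,v,w,q,r,a_k,b_k$ each connect two idempotents. I would read the components off from the supports: which rows of $\Phi$ and which columns of $\Psi$ (remembering the opposite multiplication) are hit. Summing over each row of the Peirce decomposition should give $3,7,7,1$. As a plausibility check, $3+7+7+1=18$. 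Also, $E_s$ is expected to be the idempotent whose right-hand Peirce piece is tiny, since $\Psi$ puts $s$ in the $E_\varepsilon$-rows and the cross-adjoints all enter through the $F$-column. The main obstacle is getting this bookkeeping right, especially the $\op$ in the second component, which swaps left and right. I would double-check it by directly computing a few products such as $E_sE_{a_k}$ and $E_{a_k}E_s$.

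Given \eqref{eq:regular-trace}, the form follows quickly. The element $a^\sigma a$ satisfies $\pi(a^\sigma a)=\pi(a)^\sigma\pi(a)=(x^2,u^2,sp,ps)$ by \cref{prop:radical-structure}. Hence
\[
q_{A(\cQ),\sigma}(a)=3x^2+7u^2+7ps+ps=3x^2+7u^2+8ps.
\]
For \eqref{eq:trace-form-explicit}, the bilinear form depends only on $x,u,p,s$, so the $14$ radical coordinates form the radical $\langle0\rangle^{14}$. The term $8ps$ has Gram matrix $\begin{pmatrix}0&4\\4&0\end{pmatrix}$ in the coordinates $(p,s)$, and the diagonal part is $\langle3,7\rangle$.
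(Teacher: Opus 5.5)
Your proof is correct, but it computes the regular trace differently from the paper.

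\textbf{Paper's route.} The paper multiplies out the block matrices of \cref{prop:full-adjoint}. It tabulates, for each of the eighteen vectors of $\mathcal B$, the diagonal coefficient of $L_a$, and then sums the table.

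\textbf{Your route.} You use $\mathfrak r^5=0$ from \cref{prop:radical-structure} to see that $\Tr_{A(\cQ)/K}$ vanishes on $\mathfrak r$. So only the four values $\Tr(L_{E_x}),\Tr(L_{E_u}),\Tr(L_{E_p}),\Tr(L_{E_s})$ are needed, and you identify these with $\dim_K E_\bullet A(\cQ)$.

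\textbf{The deferred checks go through.}
\begin{enumerate}[label=\textup{(\arabic*)}]
\item The first components of $E_x,E_u,E_p,E_s$ are the coordinate projections onto $\{1\}$, $\{2,3,4\}$, $\{5,6\}$, $\{7\}$. Their second components are the projections onto $\{1\}$, $\{2,3,4\}$, $\{7\}$, $\{5,6\}$. Hence they are pairwise orthogonal idempotents summing to $(I_7,I_7^{\op})$.
\item Every vector of $\mathcal B$ lies in a single Peirce space.
\item The right ideals are $E_xA(\cQ)=\Span_K\{E_x,E_z,E_{a_0}\}$, $E_uA(\cQ)=\Span_K\{E_y,E_u,E_v,E_w,E_{a_1},E_{a_2},E_{a_3}\}$, $E_pA(\cQ)=\Span_K\{E_p,E_q,E_r,E_{b_0},\ldots,E_{b_3}\}$ and $E_sA(\cQ)=KE_s$.
\end{enumerate}

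\textbf{Comparison.} The spans in (3) are exactly the four rows of the paper's table. So your argument explains that table: it is the decomposition of $A(\cQ)$ into the right ideals $E_\bullet A(\cQ)$. The radical parameters drop out of the trace because left multiplication by a radical element is nilpotent. Your route needs only support bookkeeping for four idempotents, with $3+7+7+1=18$ as a built-in check. It avoids multiplying a general element against every basis vector. The paper's route needs no structural input beyond matrix multiplication.

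The deduction of $q_{A(\cQ),\sigma}$ from $\pi(a^\sigma a)=(x^2,u^2,sp,ps)$, and the final diagonalization, are the same as in the paper.
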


\begin{proof}
Let $L_a$ denote left multiplication by $a$ on the eighteen-dimensional algebra $A(\cQ)$. In the basis $\mathcal B$, the diagonal coefficients of $L_a$ are as follows:
\[
\begin{array}{lc}
\hline
\text{Basis vectors}&\text{Diagonal coefficient of }L_a\\
\hline
E_x,E_z,E_{a_0}&x\\
E_y,E_u,E_v,E_w,E_{a_1},E_{a_2},E_{a_3}&u\\
E_p,E_q,E_r,E_{b_0},E_{b_1},E_{b_2},E_{b_3}&p\\
E_s&s\\
\hline
\end{array}
\]
The entries follow by multiplying the matrices in \cref{prop:full-adjoint}. Their sum gives \eqref{eq:regular-trace}.

By \cref{prop:radical-structure},
\[
 \pi(a^\sigma a)
 =(x,u,s,p)(x,u,p,s)
 =(x^2,u^2,sp,ps).
\]
Applying \eqref{eq:regular-trace} to $a^\sigma a$ gives
\[
 q_{A(\cQ),\sigma}(a)
 =3x^2+7u^2+7sp+ps
 =3x^2+7u^2+8ps.
\]
The remaining fourteen parameters do not occur, yielding the displayed orthogonal decomposition.
\end{proof}

These formulas hold in every characteristic different from $2$, with the integer coefficients interpreted in $K$. If $\operatorname{char}K\notin\{2,3,7\}$, the radical of $q_{A(\cQ),\sigma}$ equals the Jacobson radical of $A(\cQ)$. In characteristic $3$ or $7$, the radical of the quadratic form has dimension $15$, while the Jacobson radical still has dimension $14$.

\begin{corollary}[Trace-form obstruction]
\label{cor:trace-obstruction}
If $K$ is formally real, then $\cQ$ is not weakly hyperbolic.
\end{corollary}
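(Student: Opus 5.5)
The plan is to apply First's general criterion, \cref{thm:first-A}, to the explicit involution-trace form computed in \cref{prop:trace-form}. That theorem says $\cQ$ is weakly hyperbolic if and only if $\totalSgn q_{A(\cQ),\sigma}=0$. So it suffices to exhibit an ordering of $K$ at which $q_{A(\cQ),\sigma}$ has nonzero signature. In fact we will show that the signature is nonzero at every ordering.

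By \eqref{eq:trace-form-explicit},
\[
 q_{A(\cQ),\sigma}\cong\langle3,7\rangle\perp\begin{pmatrix}0&4\\4&0\end{pmatrix}\perp\langle0\rangle^{14}.
\]
Since $K$ is formally real, it has characteristic $0$. Hence $3,7,4$ are nonzero, and the decomposition is valid as written. By the convention of \cref{sec:preliminaries}, the signature of a degenerate form is computed on $V/\rad(q)$, so the $14$-dimensional radical does not contribute.

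Fix an ordering $P$ of $K$. The block $\begin{pmatrix}0&4\\4&0\end{pmatrix}$ is a hyperbolic plane, since $8ps$ is isotropic with a hyperbolic pair. It therefore has signature $0$. The scalars $3$ and $7$ are sums of squares, hence positive at every ordering, so $\langle3,7\rangle$ has signature $2$. Thus
\[
 \sgnop_P q_{A(\cQ),\sigma}=2\neq0 \quad\text{for every ordering } P,
\]
and \cref{thm:first-A} shows that $\cQ$ is not weakly hyperbolic.

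There is essentially no obstacle here, because the heavy lifting, namely the trace computation in \cref{prop:trace-form}, is already done. The only points needing care are bookkeeping: the characteristic-zero remark that keeps the coefficients nonzero, and the degenerate-signature convention. This argument gives a proof independent of \cref{prop:conceptual-obstruction}, which relied instead on \cref{lem:remove-hyperbolic} and \cref{cor:R-not-weak}.
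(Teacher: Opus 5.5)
Your proposal is correct and follows the paper's proof exactly. The paper likewise uses that characteristic zero keeps $3,7,4$ nonzero, reads off $\sgnop_P q_{A(\cQ),\sigma}=2$ at every ordering from the decomposition in \cref{prop:trace-form} ($\langle3,7\rangle$ positive definite, the middle block hyperbolic, the radical ignored), and concludes with \cref{thm:first-A}. Your explicit remark about the degenerate-signature convention just spells out what the paper leaves implicit.
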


\begin{proof}
A formally real field has characteristic $0$. At every ordering $P$ of $K$, the one-dimensional forms $\langle3\rangle$ and $\langle7\rangle$ are positive definite, while the middle two-dimensional form in \eqref{eq:trace-form-explicit} is hyperbolic. Hence
\[
 \sgnop_P q_{A(\cQ),\sigma}=2.
\]
First's general criterion, \cref{thm:first-A}, shows that $\cQ$ is not weakly hyperbolic.
\end{proof}

\section*{Declarations}

\noindent\textbf{Funding.}
This work was supported by the National Natural Science Foundation of China (Grant Nos.~12231009 and 11971224).

\medskip
\noindent\textbf{Competing interests.}
The author declares no competing interests.

\medskip
\noindent\textbf{Data availability.}
No datasets were generated or analyzed in this study.


\begin{thebibliography}{9}

\bibitem{BFFM2014}
E. Bayer-Fluckiger, U.A. First, D.A. Moldovan,
Hermitian categories, extension of scalars and systems of sesquilinear forms,
Pacific J. Math. 270 (1) (2014) 1--26.
\url{https://doi.org/10.2140/pjm.2014.270.1}.

\bibitem{First2020}
U.A. First,
Pfister's local--global principle and systems of quadratic forms,
Bull. Lond. Math. Soc. 52 (6) (2020) 1105--1121.
\url{https://doi.org/10.1112/blms.12385}.

\bibitem{Lam2005}
T.Y. Lam,
Introduction to Quadratic Forms over Fields,
Graduate Studies in Mathematics, vol.~67,
American Mathematical Society, Providence, RI, 2005.
\url{https://doi.org/10.1090/gsm/067}.

\bibitem{LeepSchueller1999}
D.B. Leep, L.M. Schueller,
Classification of pairs of symmetric and alternating bilinear forms,
Expo. Math. 17 (5) (1999) 385--414.

\bibitem{QSS1979}
H.-G. Quebbemann, W. Scharlau, M. Schulte,
Quadratic and Hermitian forms in additive and abelian categories,
J. Algebra 59 (2) (1979) 264--289.
\url{https://doi.org/10.1016/0021-8693(79)90126-1}.

\bibitem{Waterhouse1976}
W.C. Waterhouse,
Pairs of quadratic forms,
Invent. Math. 37 (2) (1976) 157--164.
\url{https://doi.org/10.1007/BF01418967}.

\bibitem{Wilson2013}
J.B. Wilson,
Division, adjoints, and dualities of bilinear maps,
Comm. Algebra 41 (11) (2013) 3989--4008.
\url{https://doi.org/10.1080/00927872.2012.660668}.

\end{thebibliography}
\end{document}